\numberwithin{equation}{section}
\newcommand{\gefin}{genuinely filling}
\newcommand{\gefi}{genuinely filling }
\newcommand{\toge }{totally geodesic }
\newcommand{\bgrn}{Bergeron}
\newcommand{\grass}{Grassmann }
\newcommand{\vl}{Vladimir }
\newcommand{\mkv}{Markovi\'c }
\newcommand{\mkvn}{Markovi\'c}
\newcommand{\poi}{$\pi_1$-injective }
\newcommand{\cble}{commensurable }
\newcommand{\cblen}{commensurable}
\newcommand{\cbility}{commensurability }
\newcommand{\tgj}{\widetilde{\gamma_j}}
\newcommand{\tv}{\tilde{v}}
\newcommand{\tx}{\tilde{x}}
\newcommand{\tpj}{\widetilde{p_j}}
\newcommand{\tpaij}{\widetilde{\Pi_j}}
\newcommand{\tvj}{\widetilde{v_j}}
\newcommand{\txj}{\widetilde{x_j}}
\newcommand{\tpai}{\widetilde{\Pi}}
\newcommand{\diam}{\tn{diam}}
\newcommand{\plm}{\planes(M)}
\newcommand{\sltr}{SL_2(\R)}
\newcommand{\pml}{\mathcal{PML}}
\newcommand{\db}{D_{\beta}}
\newcommand{\dbo}{D_{\beta_1}}
\newcommand{\dbt}{D_{\beta_2}}
\newcommand{\kms}{Kahn, Markovi\'c, and Smilga }
\newcommand{\qf}{quasi-Fuchsian }
\newcommand{\eqf}{$(1+\epsilon)$-\qf}
\newcommand{\di}{\,\mathrm{d}}
\newcommand{\gtm}{\mathcal{G}_2M}
\newcommand{\gn}{\mathfrak{g}_0}
\newcommand{\rank}{\tn{rank}}
\newcommand{\gr}{\mathcal{G}_2}
\newcommand{\grm}{\mathcal{G}_2M}
\newcommand{\hp}{\mathbb{H}^2}
\newcommand{\ml}{\mathcal{L}}
\newcommand{\mul}{\mu_{\ml}}
\newcommand{\tsj}{\tilde{S}_j}
\newcommand{\tsi}{\tilde{S}_i}
\newcommand{\ts}{\widetilde{S}}
\newcommand{\psltr}{PSL_2(\R)}
\newcommand{\psltc}{PSL_2(\C)}
\newcommand{\gt}{\mathcal{G}_2}
\newtheorem{theorem}{Theorem}[section]
\newtheorem{cor}[theorem]{Corollary}
\newtheorem{lem}[theorem]{Lemma}
\newtheorem{q}[theorem]{Question}
\theoremstyle{definition} 
\newtheorem{defi}[theorem]{Definition}
\newtheorem{rem}[theorem]{Remark}
\newtheorem{exa}[theorem]{Example}
\newcommand{\agn}{asymptotically geodesic}
\newcommand{\ag}{asymptotically geodesic }
\newcommand{\fhy}{\mathcal{F}\hy}
\newcommand{\genus}{\mathfrak{g}}
\newcommand{\gs}{\mathfrak{g}[S]}
\newcommand{\rk}{\tn{rank}}
\newcommand{\las}{least area }
\newcommand{\stfs}{strongly filling }
\newcommand{\stf}{strongly filling}
\newcommand{\catzeros}{CAT(0) }
\newcommand{\htms}{hyperbolic $3$-manifold }
\newcommand{\km}{Kahn-Markovi\'c }
\newcommand{\tg}{\tilde{\gamma}}
\newcommand{\tp}{\widetilde{P}}
\newcommand{\tm}{\widetilde{M}}
\newcommand{\area}{\textnormal{Area}}
\newcommand{\bd}{\overline{D}}
\newcommand{\C}{\mathbb{C}}
\newcommand{\ds}{\displaystyle}
\newcommand{\hy}{\mathbb{H}^3}
\newcommand{\inj}{\textnormal{inj}}
\newcommand{\liouville}{\mathcal{L}}
\newcommand{\Q}{\mathbb{Q}}
\newcommand{\R}{\mathbb{R}}
\newcommand{\tn}{\textnormal}
\newcommand{\T}{\mathbb{T}}
\newcommand{\vol}{\textnormal{Vol}}
\newcommand{\Z}{\mathbb{Z}}
\newcommand{\pr}{\tn{Pr}}
\newcommand{\planes}{\mathcal{P}}
\newcommand{\isom}{\tn{Isom}}
\newcommand{\hs}{\mathbb{H}^n}
\newcommand{\me}{\mathcal{E}}
\newcommand{\diff}{\,\tn{d}}
\newcommand{\gtxm}{(\gt)_xM}
\newcommand{\im}{\tn{Im}}
\newcommand{\dist}{\textbf{\tn{d}}}
\let\OLDthebibliography\thebibliography
\renewcommand\thebibliography[1]{
	\OLDthebibliography{#1}
	\setlength{\parskip}{1.5pt}
	\setlength{\itemsep}{0pt plus 0.25ex}
}
\newcommand{\address}[1]{%
	\par\noindent\textit{Address: }#1\par\vspace{0.5em}%
}
\newcommand{\email}[1]{%
	\par\noindent\textit{E-mail: }\texttt{#1}\par\vspace{1em}%
}
\title{Nearly geodesic surfaces are filling}
\author{Xiaolong Hans Han}
\date{}
\begin{document}
	\noindent
	\maketitle
	{\centering\footnotesize \quad Dedicated to Nathan M. Dunfield with gratitude, in celebration of his 50th birthday, for \\ his support since 2015. \par}
	\begin{abstract}
	Let $M$ be a closed hyperbolic $3$-manifold. We consider equivalence class $[S]$ of connected, orientable, closed surfaces immersed in $M$, modulo homotopy and commensurability. A class $[S]$ is \textit{filling} if for any $S' \in [S]$, $M-S'$ consists of components that are contractible in $M$. We prove that there exist $\epsilon_0, \gn>0$ such that excluding finitely many classes of totally geodesic surfaces of genus $<\gn $, all $(1+\epsilon_0)$-\qf $[S]$ are filling. As a corollary, embedded \poi non totally geodesic  \qf surfaces in $M$ have constants bounded below by $1+\epsilon_0$. This also gives a curvature gap theorem for embedded minimal surfaces. Each of these filling surfaces separates any pair of distinct points at $\partial \tm$. Crucial tools include the rigidity results of Mozes-Shah, Ratner, and Shah. This work is inspired by a question of Wu and Xue whether random geodesics on random hyperbolic surfaces are filling. 
	\end{abstract}
 \noindent
\section{Introduction}
\vspace{-0.1in}
Let $M$ be a closed hyperbolic $3$-manifold and $\tm$ the universal cover. 
A \emph{surface} $S$ in $M$ refers to a smooth \poi immersion $S\hookrightarrow M$ of a connected, orientable, closed surface $S$. We denote by $[S]$ an equivalence class of surfaces in $M$ modulo homotopy and commensurability. The genus $\gs$ of $[S]$ is the smallest genus of an orientable surface in $[S]$.  
 A surface $[S]$ is \textit{filling} if for every $S' \in [S]$, $M-S'$ consists of components contractible in $M$ (but these components as topological spaces themselves may not be contractible). Rubinstein-Sageev \cite{rsFillingSurfaces} define $[S]$ to be \textit{strongly filling} if for any $p \neq q \in \partial \tm$, there is an $\ts\subset \tm$ covering $S$ such that $p,q $ belong to different components of $\partial\tm -\partial \ts$. \cite{rsFillingSurfaces} proves that a strongly filling class is filling. 
 Our first theorem is the following.
\begin{theorem}\label{nearly geodesic are filling}
	Let $M$ be a closed hyperbolic $3$-manifold. There exist $\gn, \epsilon_0 >0$ such that excluding finitely many classes of totally geodesic surfaces of genus $<\gn $, all $(1+\epsilon_0)$-\qf surfaces are strongly filling.
\end{theorem}
\noindent For any $\epsilon>0$,	Jeremy Kahn and Vladimir \mkv \cite{kmImmersingAlmostGeodesic} prove that $M$ has infinitely many $(1+\epsilon)$-\qf surfaces. \mkv suggested to the author that most surfaces constructed in \cite{kmCountingEssentialSurfaces} should be filling which help formulate \Cref{nearly geodesic are filling}. If $[S]$ is strongly filling, it has essential intersection with every geodesic in $M$. If $[S]$ is filling, then by taking a least area representative $S$ (whose existence follows from Schoen-Yau \cite{syExistIncomMinTopo} and Sacks-Uhlenbeck \cite{suMinimalImmersionClosedRiemann}), \cite{rsFillingSurfaces} proves that $M-S$ consists of topological balls. 

For the rest of the paper, we set $\gn $ and $\epsilon_0$ as in \Cref{nearly geodesic are filling}. 
	\vspace{-0.05in}
\begin{cor}\label{embedded minimal surfaces of sufficiently small curvature is geodesic}
	Let $M$ be a closed hyperbolic $3$-manifold. The set of embedded \poi  quasi-Fuchsian surfaces in $M$ is decomposed into 
	\vspace{-0.05in}
	\begin{enumerate}
		\item at most finitely many totally geodesic surfaces of genus $<\gn $, and
		\vspace{-0.07in}
		\item surfaces with quasi-Fuchsian constant $\geq 1+\epsilon_0$. 
	\end{enumerate} 
			\vspace{-0.07in}
\end{cor}
We construct infinitely many closed hyperbolic $3$-manifolds each of which contains infinitely many embedded \poi \qf surfaces in \Cref{example of closed hyp 3mf with infinite embedded QF surfaces}. 
A combination of theorems due to Thurston \cite{twGeoTop} and Bonahon \cite{bfBoutsDesVar} proves that a \poi surface $S$ in $M$ is either quasi-Fuchsian or virtually fibered ($S$ lifts to a fiber of a finite cover of $M$ that is a surface bundle over $S^1$). The supremum principal curvature of a fibered minimal surface $F$ is $> 1$ by \cite{ukclosedminSurfaceHyperbolic,hlBeyondAlmostFu} and the Hausdorff dimension of the limit set of $F$ is $2$. Thus 
\Cref{embedded minimal surfaces of sufficiently small curvature is geodesic} implies that any sequence of \poi surfaces in $M$ is eventually self-intersecting or uniformly away from being totally geodesic. 
\begin{cor}\label{gap min surface}
	Let $M$ be a closed hyperbolic $3$-manifold. If $S \hookrightarrow M$ is an embedded minimal surface, then $S$ is either a totally geodesic surface of genus $<\gn $ or has supreme principal curvature $\geq \frac{\epsilon_0}{\epsilon_0+2}$.
\end{cor}
	\begin{figure}[h]
		\centering
		\includegraphics[scale=0.2]{filling-nbhd.png}
		\caption{The filling nature of nearly geodesic surfaces in a closed hyperbolic $3$-manifold (each dot represents a $(1+\epsilon_0)$-\qf surface). The constant $C$ is the universal constant coming from Seppi's curvature estimate \cite{saMinimalDiscsHyperbolicSpace}.}
		\label{filling-nbhd}
	\end{figure}
	We summarize \Cref{nearly geodesic are filling} in \Cref{filling-nbhd}. This figure is inspired by [\citenum{kmsGeometricallyTopologicallyRandomsurface}, Figure 1] and justified in \Cref{minimal surface}. 

 Each \cbility classes of Fuchsian surfaces corresponds to a $\psltr$-orbit. A sequence of \eqf minimal surfaces is  \textit{\ag}if $\epsilon\rightarrow 0$, e.g., a sequence of non-\cble Fuchsian surfaces. 
The proof of \Cref{nearly geodesic are filling} uses \Cref{limit of almost geodesic surfaces converge to dense subset} and \Cref{dense implies filling}. 
\begin{theorem}\label{limit of almost geodesic surfaces converge to dense subset}
	Let $M$ be a closed hyperbolic $3$-manifold. If $[S_j]$ is \agn, then the Grassmann $2$-plane bundles $\gt S_j$ over minimal surfaces $S_j$ converge to $\gt M$ in the Hausdorff metric. 
\end{theorem}
\begin{rem}
Independently, Al Assal and Lowe \cite{alAsymptoticallyGeodesicsurfaces} obtain \Cref{limit of almost geodesic surfaces converge to dense subset} for finite-volume hyperbolic manifolds with different perspectives and applications. In contrast, Al Assal \cite{afLimitsAsymptoticalFuchsian} shows that the measure limits of $\gt S_i$ achieve every convex combination of $\sltr$-invariant ergodic measures.
\end{rem}
By a slight abuse of language, by a \emph{plane} we mean an isometric immersion of $\hp$ in $M$ or $\hy$, or its Grassmann $2$-plane bundle in $\gtm$ or $\gt \hy$. In \Cref{limit of almost geodesic surfaces converge to dense subset}, $\gtm$ is the union of uncountably many planes. If, instead, $S_i$ is taken to be the covering of a totally geodesic surface $S$ whose degree tends to infinity as $i\rightarrow \infty$, then the limit of $\gt S_i$ is the single plane $\gt S$. Thus \Cref{limit of almost geodesic surfaces converge to dense subset} reflects the ``one versus uncountably many planes duality" suggested by \mkv to the author. 

\Cref{dense implies filling} then establishes strong fillingness by the following argument. Suppose there exist nontrivial $[\gamma_i]\in \pi_1(M-S_i)$. Then a subsequence of $(x_i,v_i) $ in the unit tangent bundle $T^1\gamma_i$ of the geodesic $\gamma_i$ converges to $(x,v)$ tangent to a geodesic $\gamma$. By \Cref{limit of almost geodesic surfaces converge to dense subset}, there exist $(p_j, \Pi_j) \in \gt S_j \rightarrow (x, \Pi)\perp (x,v)$. This implies that $S_j$ is nearly orthogonal to $\gamma_j$ as well. Then $\partial\tsj$ separates $\partial \tgj$ which means $\pi_1(M-S_j)$ cannot contain $\gamma_j$. We need to use \cite{saMinimalDiscsHyperbolicSpace} which controls the norms of the second fundamental forms of $\tsj$ by $\epsilon_j$ and standard compactness theory for minimal surfaces to prove $\tsj$ graphically  converge to a totally geodesic plane $\tp$. 

 The proof of \Cref{limit of almost geodesic surfaces converge to dense subset} uses crucially \cite{rmRatner'stheorem,snRatnerTheorem} which imply a plane in $M$ is either dense or a closed Fuchsian surface, and uses \cite{msErgodicInvariantMeasure} which implies that a sequence of Fuchsian surfaces in distinct \cbility classes equidistribute (become uniformly dense in $\gt M$). Since $S_i$ are minimal, their Grassmann bundles are connected closed sets, and their Hausdorff limit is a connected closed subset $\gt S$ of $\gtm$. Any finite union of the \grass bundles over Fuchsian surfaces is a disconnected closed set. Thus, $\gt S$, which is closed and $\psltr$-invariant, must contain a dense plane in $\gtm$. 

\Cref{nearly geodesic are filling} is inspired by \Cref{large Fuchsian component}. 
\begin{theorem}\label{large Fuchsian component}
	Let $M$ be a closed hyperbolic $n$-manifold. If $S_i$ is a sequence of non \cble totally geodesic hypersurfaces, then the maximal diameter of components of $M-S_i$ tends to $0$, the number of components tends to infinity, and all but finitely many $[S_i]$ are strongly filling. 
\end{theorem}
Since $M$ is aspherical, a component of $M-S$ of diameter $<2\inj(M)$ is necessarily contractible in $M$. 
 By \cite{bfmsTotallyGeodesicSubmanifolds} (for $n\geq 3$) and \cite{mmArithmeticityHyperbolic} (for $n=3$), if $M$ contains infinitely totally geodesic hypersurfaces, then $M$ is arithmetic of type I. 

An outline of the proof of \Cref{large Fuchsian component} for the dimension $n=3$ is the following. Let $\pi:\hy\rightarrow M$ be the covering projection. Suppose there exist components $C_i \subset M-S_i$ with $\diam(C_i)\geq 2R$. Since $\pi^{-1}(C_i)\subset \hy $ consists of convex components, $C_i$ contains a geodesic segment $\gamma_i$ of length $= 2R$. Up to a subsequence, $\gamma_i $ converge to a geodesic segment $\gamma$ of length $2R$. \Cref{transverse intersection is an open property} proves that the subset of $\gtm$ whose points are tangent to the planes transversally intersecting the central half of $\gamma$ with uniformly bounded angles contains an open set of $\gtm$ of uniformly positive measure (depending only on $R$). Thus eventually $S_i$ must intersect the central half of $\gamma$ with uniformly bounded angles. This shows that $S_i$ must transversally intersect $\gamma_i$ for large $i$, a contradiction. If the maximal diameter of every component of $M-S_i$ is less than $2\inj(M)$, then $S_i$ is filling.

By \Cref{finitely many non filling}, except for finitely many discrete planes (closed totally geodesic surfaces) $P_1, \cdots, P_m$, every plane $P$ in $M$ is strongly filling. \cite{lmPolynomialEffectiveDensity} points out the rate of equidistribution and hence an upper bound on $m$ depends only on the rate of mixing, $\vol(M)$, and the injectivity radius. It is surprising that the techniques of \cite{kmImmersingAlmostGeodesic} and hence the distribution of \km nearly geodesic surfaces also rely on these three geometric invariants. 

\Cref{filling fuchsian surface intersects every geodesic} proves that a filling Fuchsian surface necessarily has some transverse intersections with all geodesics in $M$. Thus all but finitely many planes cut every geodesic of $M$. 

 One way to visualize \Cref{large Fuchsian component} and its transition to \Cref{nearly geodesic are filling} is the following. Let $D$ be a convex polyhedron fundamental domain for $M$ in $\hy$. The generators of $\pi_1 \rightarrow \Gamma \leq \psltc$ are given by finitely many side-pairings of $\partial D$, i.e., $\{ g\in \Gamma: g\bd \cap \bd\subset \partial\bd \tn{ and} \neq \emptyset\} $ (see \Cref{fundamental domain convex with side-pairings}). 
   The preimage in $\hy$ of a filling plane cuts $D$ and $\partial D$ into components so that any geodesic that passes through $D$ must intersect one of the lifts transversally. 

A plane is also a submanifold with principal curvatures $0$. Let $P_1, \cdots, P_m$ (from \Cref{finitely many non filling}) enumerate non filling planes in $M$. The union $K$ of the preimages $\tilde{P_i}$ in $\hy$ of $\cup_{i=1}^m P_i$ is a discrete set of plane. 
If $S$ is a minimal surface with very small  but nonzero principal curvature, its lift $\ts\subset \hy$ cannot be supported entirely in $\cup_{P\in K}N_\epsilon(\tilde{P})$ for $\epsilon$ small enough (see \Cref{ag surfaces not supported in small nbhd of finitely many Fuchsian}). Thus by \Cref{finitely many non filling}, $\ts$ must have a large chunk very close to a plane $P_f$ whose image under $\Gamma$ is strongly filling. Thus the image of $\ts$ under $\Gamma$ is also strongly filling, implying that $S$ is strongly filling. 
 In other words, although the proof of \Cref{large Fuchsian component} relies on the rigidity of $\sltr$-orbits, the topological filling nature persists for small-curvature surfaces which are not homogeneous (i.e., orbits of Lie subgroups of $\psltc$).  

We now describe some motivations from and applications to geometric group theory and virtual properties for strong fillingness. 
We say $[S]$ in $ M$ \textit{separates} two distinct points $p, q \in  \partial \widetilde{M}$ if $S'\in [S]$ has a cover $\widetilde{S'}\subset \tm$ such that  $p, q$ belong to different components of $\partial \widetilde{M}-\partial\widetilde{S'}$. Kahn-\mkv \cite{kmImmersingAlmostGeodesic} construct a collection of nearly geodesic surfaces which is dense in the Grassmannian $2$-plane bundle $\gtm$. 
Sageev \cite{smCodim1Subgroups,smCubeComplex} pioneered in a simple but powerful construction of a cocompact action of a word-hyperbolic group on a CAT(0) cube complex $X$ that is dual to a system of walls associated to a finite collection of codimension-$1$ quasiconvex subgroups. Building on Sageev and motivated by obtaining a proper action on $X$, \bgrn-Wise [\citenum{bwBoundaryCriterionCubulation}, Theorem 1.4] formulate a boundary criterion and point out that these nearly geodesic surfaces satisfy the criterion [\citenum{bwBoundaryCriterionCubulation}, Corollary 4.2]. 

\begin{theorem}[\citenum{kmImmersingAlmostGeodesic,bwBoundaryCriterionCubulation}]\label{separation of geodesics}
	Let $M$ be a closed hyperbolic $3$-manifold. There exist a finite collection $\{[S_1], \cdots, [S_m]\}$ of \qf surfaces  such that every pair of distinct points $p, q \in \partial \widetilde{M}$ is separated by some $\pi_1(S_i)$. 
\end{theorem}
\noindent
Assuming \Cref{separation of geodesics}, Agol \cite{aiVirtualHaken} proves that $\pi_1(M)$ is virtually special (in the sense of Haglund-Wise \cite{hwSpecialCubeComplex}), and hence virtually fibered by \cite{aiCriteriaVirtualFibering}. 
For other important consequences in geometric group theory and virtual properties of $3$-manifolds, see \Cref{cubulations}, \cite{aiVirtualHaken,hwSpecialCubeComplex,smCubeComplex,wdQuasiconvex}, and the references therein. 
There is in general no estimates for the number $m$ in \Cref{separation of geodesics} to separate every pair of points at infinity. If $[S]$ is \qf and strongly filling, then $\{[S]\}$ separates every pair of distinct points at $\partial \tm$ and thus satisfies the Bergeron-Wise boundary criterion.

In \Cref{cubulations} we point out that a \stfs $[S]$ gives a cubulation with a single orbit of hyperplanes (whose stabilizer is essential). Non commensurable \stfs surfaces give rise to  cubulations that are distinct under the isometric actions by $\pi_1(M)$ and thus their existence answers a question of Fioravanti-Hagen on [\citenum{fhDeformingCubulations}, p. 879]. 

\begin{cor}\label{cor cubulations}
	Let $M$ be a closed hyperbolic $3$-manifold or a closed arithmetic hyperbolic manifold with a totally geodesic hypersurface. The space of essential and hyperplane-essential cubulations with a single orbit of hyperplanes whose hyperplane-stabilizers are hypersurface subgroups is infinite up to subdivision and natural isometric actions by $\pi_1(M)$. 
\end{cor}
 A stronger notion of separation than fillingness is \textit{ubiquity}. A collection of immersed surfaces in a hyperbolic $3$–manifold $M $ is \textit{ubiquitous} if, for any pair of disjoint and nonasymptotic planes $\Pi, \Pi' \subset \hy$, some surface in the collection has an embedded preimage $\Tilde{S} \subset \hy$ that separates $\Pi$ from $\Pi'$. Kahn and \mkv \cite{kmImmersingAlmostGeodesic} prove that in a closed hyperbolic $3$-manifold, the set of nearly geodesic surfaces they construct is ubiquitous. Cooper and Futer \cite{cfUbiquitous} prove that in a cusped hyperbolic $3$-manifold, there exists a set of $K$-quasi-Fuchsian surfaces which is ubiquitous for \textit{some} $K>1$, while Kahn and Wright \cite{kwNearlyFuchsian} prove that for any $K>1$,  there exists a ubiquitous set of $K$-quasi-Fuchsian surfaces. 
\begin{cor}\label{ubiquitousSurfaces}
	Let $M$ be a closed hyperbolic $3$-manifold (resp. $n$-manifold). A sequence of \ag  surfaces (totally geodesic hypersurfaces) is ubiquitous. 
\end{cor}
The proofs of \Cref{ubiquitousSurfaces} and \Cref{limit of almost geodesic surfaces converge to dense subset} rely on the rigidity of unipotent flow, i.e., theorems of Ratner \cite{rmRatner'stheorem} and Shah \cite{snRatnerTheorem}, which is very different from the use of geodesic flows in  \cite{kmImmersingAlmostGeodesic,cfUbiquitous,kwNearlyFuchsian} (but \Cref{ubiquitousSurfaces} and \Cref{limit of almost geodesic surfaces converge to dense subset} do not give the existence of nearly geodesic surfaces). The analogous statement to \Cref{ubiquitousSurfaces} for cusped hyperbolic $3$-manifolds is a corollary of the main result of \cite{alAsymptoticallyGeodesicsurfaces}. 

Although the fillingness sorts out a subclass of submanifolds of a hyperbolic $3$-manifold, it also reflects certain topological complexity of the ambient manifold (this principle fails in general). Recall from Thurston [\citenum{twThurstonThreemanifold}, Definition 3.8.1] that a geometric $3$-manifold is a Riemannian $3$-manifold supporting one of the eight model geometries. Nathan M. Dunfield first suggested the existence of \poi filling surfaces (with infinite fundamental groups) in a non hyperbolic geometric $3$-manifold, i.e., the Hantzsche-Wendt manifold. 
\begin{theorem}\label{filling 3-manifold}
	If $M$ is a geometric $3$-manifold and admits a $\pi_1$-injective filling  class of surfaces, then $M$ is hyperbolic, spherical, Euclidean, or $\hp \times \R$.
\end{theorem}
In the proof we explicitly construct some filling surfaces; see \Cref{fillingflat} for the Euclidean manifolds and \Cref{fillinghyperbolic} for the $\hp \times \R$ ones.
We also introduce an elementary inequality connecting the volume of a hyperbolic $3$-manifold with the area of its filling surfaces. By \Cref{filling representative}, a least area surface in a filling $[S]$ cuts $M$ into balls and thus naturally $\area(S)$ and $\vol(M)$ satisfy the isoperimetric inequality for $M$. The hyperbolic space can be characterized as one satisfying linear isoperimetric inequalities. Let $\rank(M)$ be the smallest number of generators of $\pi_1(M)$. 
\begin{theorem}\label{isoperimetric inequality filling surface has area at least volume}
	Let $M$ be a closed hyperbolic $3$-manifold and $[S]$ a filling class of $\pi_1$-injective surfaces. Then any representative $S$ satisfies
	\begin{equation}\label{area of S and M}
		\area(S) > \vol(M),
	\end{equation}
	and there is an absolute constant $C$ such that 
	\begin{equation}\label{rank of a filling surface and hyperbolic manifold}
		\rank(S)>C\cdot\rank(M).
	\end{equation}
\end{theorem}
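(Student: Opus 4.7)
The plan is to derive the area inequality from the linear isoperimetric inequality in $\hy$ applied to the contractible components of $M\setminus S$, and then bootstrap the rank inequality by applying the area bound to a minimal representative of $[S]$, using Gauss--Bonnet and a Margulis-type bound $\rank(M)\lesssim \vol(M)$.

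Decompose $M\setminus S = \bigsqcup_{i=1}^{k} B_i$ into its contractible open components. Because $B_i$ is simply connected with compact closure in $M$, the covering map $\hy\to M$ restricts to an isometric homeomorphism from $B_i$ onto a bounded open set $\widetilde{B}_i\subset\hy$ with $\vol(\widetilde{B}_i)=\vol(B_i)$ and $\area(\partial\widetilde{B}_i)=\area(\partial B_i)$. Fix any Busemann function $\phi$ on $\hy$, so that $|\nabla\phi|\equiv 1$ and $\Delta\phi\equiv -2$, and set $\omega:=-\tfrac12\iota_{\nabla\phi}\,dV$. Then $d\omega=dV$ and $|\omega|\equiv \tfrac12$, so Stokes' theorem on $\widetilde{B}_i$ yields
\[
\vol(B_i)\;=\;\int_{\widetilde{B}_i} d\omega\;=\;\int_{\partial\widetilde{B}_i}\omega\;<\;\tfrac12\area(\partial B_i),
\]
with strict inequality since equality would force $\partial\widetilde{B}_i$ to be tangent to horospheres and hence non-compact, contradicting boundedness of $\widetilde{B}_i$. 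Away from its measure-zero self-intersection locus, $S$ is locally embedded and each of its points lies on the boundary of exactly two components of $M\setminus S$, so $\sum_i\area(\partial B_i)\leq 2\area(S)$. Summing the displayed inequality over $i$ then gives
\[
\vol(M)\;=\;\sum_i \vol(B_i)\;<\;\sum_i \tfrac12\area(\partial B_i)\;\leq\;\area(S),
\]
which is \eqref{area of S and M}.

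For the rank inequality, replace $S$ by a minimal immersed representative $S'\in[S]$; this exists by Schoen--Yau/Sacks--Uhlenbeck because $\pi_1(S)\hookrightarrow \pi_1(M)$ and $M$ is closed hyperbolic. Since filling depends only on $[S]$, $S'$ is still filling, and the area bound gives $\vol(M)<\area(S')$. Minimality combined with the Gauss equation forces $K_{S'}\leq -1$, so Gauss--Bonnet yields $\area(S')\leq -2\pi\chi(S')=4\pi(g-1)<2\pi\rank(S)$, where $g\geq 2$ is the genus of $S$ and $\rank(S)=2g$. A standard Margulis covering argument---cover $M$ by $\epsilon$-balls of radius below the Margulis constant and use the non-tree edges of their nerve to generate $\pi_1(M)$---yields an absolute constant $C_0$ with $\rank(\pi_1(M))\leq C_0\vol(M)$. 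Chaining the three estimates,
\[
\rank(M)\;\leq\;C_0\vol(M)\;<\;2\pi C_0\rank(S),
\]
proving \eqref{rank of a filling surface and hyperbolic manifold} with $C=1/(2\pi C_0)$.

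The main delicacies are the boundedness of each lift $\widetilde{B}_i$ in $\hy$ (needed for strictness in the isoperimetric step via the horosphere argument) and the boundary-multiplicity count $\sum_i\area(\partial B_i)\leq 2\area(S)$ in the immersed setting, where several sheets of $S$ may locally bound the same $B_i$. Both are routine once the setup is clarified; the rank step otherwise rests only on the standard volume bound on $\rank(M)$.
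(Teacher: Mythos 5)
Your proposal is correct and follows the same overall skeleton as the paper's proof: lift each contractible complementary component to $\hy$, apply the linear isoperimetric inequality with the factor-two boundary count $\sum_i\area(\partial\widetilde B_i)\leq 2\area(S)$, and get the rank bound by chaining $\rank(M)\lesssim\vol(M)<\area(S')\leq 4\pi(g-1)$ for a minimal representative. The differences are in sourcing rather than substance. For the area step the paper simply cites the classical isoperimetric inequality and compares with the round ball of equal volume, where $\area(\partial B(r))\geq 2\vol(B(r))$ with equality only at $r=0$; this yields strictness immediately, whereas your Busemann-calibration argument is a nice self-contained substitute but its strictness clause (``tangent to horospheres hence non-compact'') should be tightened, e.g.\ by noting that equality forces the outward normal to equal $-\nabla\phi$ everywhere on $\partial\widetilde B_i$, which is impossible at a point where $\phi$ attains its maximum on the compact closure. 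For the rank step the paper cites the explicit lattice bound of Belolipetsky--Gelander--Lubotzky--Shalen together with Meyerhoff's Margulis-constant estimate to get $\rank(M)\leq 1.698\times 10^9\vol(M)$; your $\epsilon$-ball nerve sketch gives the same qualitative bound but, as stated, needs the thick--thin decomposition (short geodesics prevent a uniform lower injectivity-radius bound), so it is cleaner to quote the known volume--rank bound as the paper does. On the other hand you are more explicit than the paper about two points it glosses over: that filling is a property of the homotopy class, so the area inequality applies to the minimal (Schoen--Yau) representative, and the multiplicity-two accounting of boundary area for an immersed $S$.
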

Current estimates give $C$ an order of $10^{-9}$. The qualitative linearity between the rank of $S$ and $M$ in \eqref{rank of a filling surface and hyperbolic manifold} contrasts other geometries: \Cref{example filling torus in rank tend to infinity} gives a sequence of $3$-manifolds $M_j$ with $\hp\times \R$ geometry such that $\rank(M_j) \rightarrow \infty$ but all admit a filling torus. The proof of \eqref{area of S and M} is a natural consequence of the linear isoperimetric inequality for hyperbolic $3$-space. The proof of \eqref{rank of a filling surface and hyperbolic manifold} relies on \eqref{area of S and M} and \cite{bglsCountingArithmeticLattices} which bound the rank from below by the volume for symmetric space. Cooper \cite{cdPresentationLengthVolume} bounds the volume by the presentation length which inspires \eqref{rank of a filling surface and hyperbolic manifold}. 

 We summarize various theorems for hyperbolic $n$-manifolds in \Cref{table1} for convenience. 
\begin{table}[h]
	\centering
	\begin{tabular}{|m{8cm}|m{3.2cm}|m{3.7cm}|}
		\hline
		\vspace{0.05in}
		{\large Properties} &\vspace{0.05in} {\large $n=2$} &\vspace{0.05in} {\large $n=3$} \\ 	
		\hline
		\vspace{0.05in} Minimizing measures of a hypersurface minimizes its intersection number with closed geodesics.&\vspace{0.05in}  {\Large \checkmark} &\vspace{0.05in} False in general, only true for geodesic submanifolds (\Cref{intersection between geodesic and codimension one submanifold} and \ref{non geodesic least area surface does not minimize intersection number}).	\\
		\hline
		\vspace{0.05in}	Filling geodesic hypersurfaces have some transverse intersections with every  geodesic. &\vspace{0.05in} {\Large\checkmark}  &\vspace{0.05in} {\Large\checkmark} for $n\geq 3$ (\Cref{filling fuchsian surface intersects every geodesic}).\\ 
		\hline
		\vspace{0.05in}	Random geodesic hypersurfaces are filling (Wu and Xue \cite{wxPrimeGeodesicTheorem} asked on a Weil-Petersson random hyperbolic surface of genus $g$, are most closed geodesics of length  $\gg g$ filling). &\vspace{0.05in} 
		Qualitatively, known to experts, e.g. Arana-Herrera  \cite{ahfEffectiveFilling}. Quantitatively, 
		 \cite{dsCountingGeodesics}: almost every closed geodesic of
		length $\gg g(\log g)$ is filling.  &\vspace{0.05in} {\Large\checkmark} in every closed hyperbolic manifold of dim $\geq 3$, for all but finitely many classes (\Cref{large Fuchsian component}). \\ 
		\hline	
		\vspace{0.05in}	There exist infinitely many embedded geodesic hypersurfaces.&\vspace{0.05in} \Large{\checkmark} &\vspace{0.05in} No, not even asymptotically geodesic, \Cref{embedded minimal surfaces of sufficiently small curvature is geodesic}.	 \\
		\hline
		\vspace{0.05in}	There exists a sequence of hypersurfaces whose
		 measure convergence has distinct support from Hausdorff convergence. &\vspace{0.05in} {\Large \checkmark} for many sequences of geodesics by Thurston (\cite{twThurstonThreemanifold,bmIntroductionGeometricTopology}). &\vspace{0.05in} {\Large \checkmark} for asymptotically Fuchsian surfaces (\cite{alAsymptoticallyGeodesicsurfaces} and \Cref{limit of almost geodesic surfaces converge to dense subset}, \cite{alAsymptoticallyGeodesicsurfaces}) unless eventually \cblen. \\
		\hline
	\end{tabular}	
	\caption{Some analogies and contrasts between the hypersurface theory of hyperbolic surfaces and that of hyperbolic $3$-manifolds.}
	\label{table1}
\end{table}

The motivations for \Cref{nearly geodesic are filling} are twofold: one from the statistics of the topological properties of closed geodesics on a random hyperbolic surface, in particular, the work of Wu and Xue \cite{wxPrimeGeodesicTheorem}, and the other from the rigidity of unipotent flows. 
\subsection{The motivations from hyperbolic geometry and random geodesics}
The polar coordinates for the $2$-dimensional hyperbolic metric is 
\begin{equation}\label{polar coordinates}
	g=\di r^2 +\sinh^2 r \di \theta^2.
\end{equation}
A disk of radius $r$ has an area of $2\pi (\cosh r-1)$ and a perimeter of $2\pi \sinh r$. Let $B$ be a compact contractible domain $\subset \hp$ with piecewise smooth boundary $\partial B$. The linear isoperimetric inequality (e.g., [\citen{buser2010geometry}, p. 211]) is
\begin{equation} \label{linear isoperimetric inequality}
	\area(B) <  \ell (\partial B).\end{equation}
 Since a genus-$g$ hyperbolic surface $S_g$ has an area of $4\pi(g-1)$, its fundamental domain $D\subset \hp$ has a boundary length of at least $4\pi(g-1)$. A filling geodesic $\gamma$ cuts the ambient hyperbolic surface into a disjoint union of polygons. Thus by \eqref{linear isoperimetric inequality}, a filling geodesic must have a length of at least $2\pi(g-1)$. On $S_g$, there are only finitely many closed geodesics of length less than $2\pi(g-1)$. Arana-Herrera \cite{ahfEffectiveFilling} suggests that all but a quantitatively small number of primitive closed geodesics of a closed, orientable hyperbolic surface are filling. Quantitatively, Wu and Xue  [\citenum{wxPrimeGeodesicTheorem}, Question on p. 5] ask the following.  
\begin{q}
	As $g \rightarrow \infty$, on a generic closed hyperbolic surface $S$ in the Weil-Petersson model, are
	most closed geodesics of length $\gg g$ filling?
\end{q}
Dozier and Sapir in [\citenum{dsCountingGeodesics}, Section 5.1] construct a sufficiently fine triangulation $\mathcal{D}$ of $S$ and flow boxes centered at the edges of $\mathcal{D}$ so that \textit{most} sufficiently long geodesics pass through all these boxes and hence are filling. Their insightful ideas apply to hypersurfaces in higher-dimensional manifolds and detect fillingness. However, for \Cref{large Fuchsian component,nearly geodesic are filling}, we have adapted an argument different from \cite{dsCountingGeodesics} to prove large totally geodesic surfaces are filling by taking advantage of the rigidity of $\sltr$-orbits explained below. The crucial difference is that, given any prescribed subset $O$ of $\gtm$, all large \toge  surfaces \textit{must} get close and nearly tangent to $O$. 

\subsection{The motivations from the rigidity of unipotent flows}\label{motivationRatner}
There is a huge variety of invariant sets of geodesic flows on the unit tangent bundle $T^1S$ of a hyperbolic surface, i.e., dense [\citenum{khModernDynamics}, 17.5, Corollary 18.3.5], closed geodesics, geodesic laminations [\citenum{bmIntroductionGeometricTopology}, 8.3], and sets with Hausdorff dimension greater than $1$ but less than $3$ [\citenum{lsVariationsTheorem}, Theorem 1.3]. The flexibility of such invariant sets is also reflected by the existence of arbitrarily long closed geodesics supported on a proper subsurface of $S$, which are never filling. Although Fuchsian surfaces have principal curvature $0$ and geometrically generalize geodesics, it is their connection with dynamic systems that gives rise to their filling topological nature in \Cref{large Fuchsian component} and \Cref{nearly geodesic are filling}. 

Let $G$ be a connected Lie group and $U$ a unipotent one-parameter subgroup of $G$. In contrast with geodesic flows, there is a strong rigidity for the orbits of horocycle flow ($U$-orbits): Hedlund's theorem \cite{hgHorocyclesFlowMinimal} proves the horocycle flow on a closed hyperbolic surface $S$ is minimal, i.e., every horocycle on $S$ is dense. Let $SO_0(n,1)$ denote the connected component of the group of linear transformations on $\R^{n+1}$ preserving the bilinear forms $\langle x, y \rangle = \sum_{i=1}^{n}x_iy_i-x_{n+1}y_{n+1}.$ 
Shah \cite{snRatnerTheorem} classifies the orbit closure of $SO_0(n-1,1)$ for actions on finite-volume  quotients of $SO_0(n,1)$ (the framed bundle of $\mathbb{H}^n$) for $n\geq 3$; Etienne Ghys pointed out the implications in geometry: the projection of a codimension-one geodesic hyperplane $P\subset \mathbb{H}^n$ to a closed hyperbolic $n$-manifold $M$ is either dense or a closed totally geodesic hypersurface. For $n \geq 3$, the stark contrast between $SO_0(n-1, 1)$ and $SO_0(1,1)$ (which acts as the geodesic flow) relies crucially on the behavior of the orbits of nontrivial unipotent $1$-parameter subgroups $U$ contained in $SO_0(n-1,1)$, as can be seen in [\citenum{snRatnerTheorem}, Section 4-6] (the linearization of $U$ is a polynomial, allowing non-divergence estimates which do not hold for geodesic flows).   
Margulis \cite{mgDiscreteSubgroupsErgodicTheory} shows how to effectively study orbit closures in homogeneous spaces of Lie groups by analyzing minimal closed invariant sets of the action of unipotent subgroups. Some other important works include, for example, Dani-Smillie, Dani, Dani-Margulis, which we refer the readers to the references in e.g., \cite{mgDiscreteSubgroupsErgodicTheory,snRatnerTheorem,snUniformlydistributedOrbits,rmRatner'stheorem}. 

 Ratner's celebrated breakthrough \cite{rmRatner'stheorem} classifies finite ergodic $U$-invariant measures and proves every unipotent subgroup $U$ of a connected Lie group $G$ is strictly measure rigid, i.e., for \emph{every} discrete subgroup $\Gamma\leq G$ and every $U$-invariant Borel probability measure $\mu$ on $\Gamma\backslash G$, the support of $\mu$ is a closed orbit of the subgroup of $G$ preserving $\mu$.   Building on \cite{rmRatner'stheorem}, \cite{msErgodicInvariantMeasure} implies that in a finite-volume hyperbolic $n$-manifold $M$, a sequence of non-\cble totally geodesic hypersurfaces $S_i$ becomes equidistributed, i.e., the probability measure induced by the inclusion of $\mathcal{G}_{n-1}S_i$ on the Grassmann bundle $\mathcal{G}_{n-1}M$ converges to the Liouville measure on $\mathcal{G}_{n-1}M$. 
 
 Calegari-Marques-Neves \cite{cmnCountingminimal} initiated the studies of the distribution of minimal surfaces in negatively curved manifolds using homogeneous dynamics and rigidity results. 
 Lowe \cite{lbDeformationsTotallyGeodesic} studies the minimal foliations of Grassmann bundles of negatively curved manifolds. He also proves various quantitative density and measure convergence results for \ag surfaces in non-homogeneous metrics, e.g., [\citenum{lbDeformationsTotallyGeodesic}, Theorem 1.9]. Jiang \cite{jrMinimalSurfaceEntropyCusped} generalizes the minimal surface entropy results of \cite{cmnCountingminimal} to higher-dimensional and  noncompact finite-volume hyperbolic manifolds. Thurston in [\citenum{twThurstonThreemanifold}, Remark after Corollary 8.8.6] points out that unlike closed hyperbolic $3$-manifolds, a noncompact finite-volume hyperbolic manifold might have infinitely many homotopy classes of $\pi_1$-injective surfaces of bounded genus.  Jiang \cite{jrMinimalSurfaceEntropyCusped} proves that the number of homotopy classes of nearly geodesic surfaces of bounded genus remains finite. Lowe  \cite{lbDeformationsTotallyGeodesic}, following ideas of Labourie \cite{lfAsymptoticCounting} and Lowe-Neves [\citenum{lnMinSurEntropy}], proves that the weak-$^*$ limit of the probability measures induced by a sequence of \ag surfaces is $\psltr$-invariant. Al Assal \cite{alAsymptoticallyGeodesicsurfaces} proves that the space of such limiting measures is exactly the space of $\psltr$-invariant measures.

An outline of the paper is the following. In \Cref{prelim}, we recall some background on \qf and minimal surfaces, and the metric and measure on Grassmann bundles. In \Cref{proofs}, we prove \Cref{large Fuchsian component,nearly geodesic are filling}, \Cref{embedded minimal surfaces of sufficiently small curvature is geodesic}, \ref{gap min surface}, \ref{cor cubulations}, and \ref{ubiquitousSurfaces}. In \Cref{other}, we classify geometric $3$-manifolds containing a filling $\pi_1$-injective surface, construct explicit filling surfaces in Euclidean and $\hp\times \R $ geometry, and prove \Cref{filling 3-manifold,isoperimetric inequality filling surface has area at least volume}.
\section*{Acknowledgment}
The author would like to thank Wenyuan Yang for suggesting a connection between \Cref{large Fuchsian component} and the work of Kahn-\mkvn. He thanks \vl \mkv for insightful discussions on the asymptotic behaviors of the planes which help formulate \Cref{limit of almost geodesic surfaces converge to dense subset} and part of \Cref{nearly geodesic are filling}. He also thanks Nathan M. Dunfield, Mark Hagen, Zhenghao Rao, and Yibo Zhang for many helpful discussions. He thanks his postdoc mentor Yunhui Wu for many helpful discussions in the last few years on hyperbolic geometry and minimal surfaces. The author is truly grateful for anonymous referee's very long report which significantly improves the paper in many aspects. 
Part of the work is completed when the author visits the Institut Fourier. The author would like to thank the institute for its hospitality and thank Andrea Seppi and Zeno Huang for their interest in this work and for helpful discussions. The author is partially supported by the start-up grants at Shanghai Institute of Mathematics and Interdisciplinary Sciences and  NSFC No. 12501084. 

 \tableofcontents
    \section{Preliminaries} \label{prelim}
\subsection{Quasi-Fuchsian surfaces and minimal surfaces}\label{minimal surface}
A surface $S \hookrightarrow M$ is \textit{$\pi_1$-injective} if the induced homomorphism $\pi_1(S)\rightarrow \pi_1(M)$ is injective.
 We call $S\hookrightarrow M$ \textit{primitive} if $\pi_1(S)$ is not a proper subgroup of the fundamental group of another orientable subsurface.
Since $M$ is hyperbolic, a Fuchsian surface $S$ of genus $g$ has intrinsic sectional curvature $-1$. The conjugacy classes of surface subgroups $\pi_1(S)$ of $\pi_1(M)$ are in one-to-one correspondence with the homotopy classes of \poi immersions $f: S\rightarrow M$. We define a local variation of $f(S)$ to be a smooth map 
$$F: S \times (-\eta, \eta)\rightarrow M \quad (\eta >0)$$ with $F(x,0)=x$ for all  $x\in S.$
For sufficiently small $|t|$, $\Psi_t(\cdot)\coloneqq F(\cdot, t)$ defines a smooth immersion of $S$ into $M$. A surface $S$ is \textit{minimal} if 
$$\frac{\di}{\di t}\area(\Psi_t(S))|_{t=0} =0 $$
for every local variation $\Psi$. A $\pi_1$-injective surface $S$ is \textit{least area} if $S$ achieves the smallest area in its homotopy class (which is then necessarily minimal).
 By  \cite{suMinimalImmersionClosedRiemann,syExistIncomMinTopo}, a $\pi_1$-injective immersion $f: S\rightarrow M$ is homotopic to a smoothly immersed least area surface. Henceforth, we assume all such immersions $f: S \rightarrow M$ are minimal unless stated otherwise.  A surface is \textit{Fuchsian} or \textit{totally geodesic} if any geodesic on $S$ with respect to its induced Riemannian metric is also a geodesic on $M$. For further background on minimal surfaces, see \cite{cmCourseMinSurface} and [\citenum{jjRiemannianGeoAnalysis}, 5.4].

An immersed surface $f: S\hookrightarrow M$ is \textit{\eqf}if the limit set of a universal cover $\tilde{S} \subset \hy $ is a $(1+\epsilon)$-quasicircle. Two $\pi_1$-injective surfaces $S_1, S_2 \subset M$ are \textit{commensurable} if there exists a $g\in \pi_1(M)$ such that $g\pi_1(S_1)g^{-1}\cap\pi_1(S_2)$ is of finite index in both $\pi_1(S_1)$ and $\pi_1(S_2)$. Commensurable surfaces share the same \qf constants. 

Seppi proves the following theorem.
\begin{theorem}[\citenum{saMinimalDiscsHyperbolicSpace}, Theorem A]\label{seppi curvature}\label{seppi curvature estimates}
	There exist universal positive constants $\epsilon_1$ and $C$ such that for every $(1+\epsilon)$-\qf minimal surface with $\epsilon\leq \epsilon_1$, its supremum principal curvature $\lambda_0\coloneqq	\|\lambda\|_{L^{\infty}}$ satisfies
	\begin{equation}\label{eq seppi curvature}
	 \lambda_0 \leq C\epsilon.
	\end{equation}
\end{theorem}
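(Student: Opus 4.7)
The plan is to prove this estimate by a compactness--rescaling contradiction combined with a linearization of the minimal surface equation at the totally geodesic disk. First I would pass to the universal cover, reducing the statement to one about a properly embedded minimal disk $\widetilde{S} \subset \hy$ whose asymptotic boundary is a $(1+\epsilon)$-quasicircle $\Lambda_\epsilon \subset S^2_\infty$. Because $\widetilde{S}$ is minimal, its principal curvatures are $\pm \lambda$ for a single nonnegative scalar function $\lambda$, and the Gauss equation gives intrinsic curvature $K_{\widetilde{S}} = -1 - \lambda^2 \leq -1$. This pinching is the key a priori input that rules out concentration and gives smooth control on any small geodesic ball.

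Second, I would establish the qualitative statement $\|\lambda\|_{L^\infty}\to 0$ as $\epsilon\to 0$ by contradiction. Suppose there is a sequence of $(1+\epsilon_n)$-quasi-Fuchsian minimal disks $\widetilde{S}_n$ with $\epsilon_n\to 0$ but $\|\lambda_n\|_{L^\infty}\geq c>0$. Choose points $p_n\in \widetilde{S}_n$ nearly maximizing $|\lambda_n|$, and normalize by isometries of $\hy$ so that $p_n = 0$ in a fixed ball model with $T_{p_n}\widetilde{S}_n$ a fixed $2$-plane. Since $\Lambda_{\epsilon_n}$ Hausdorff-converge (after normalization) to a round circle $\Lambda_0$, Anderson's Plateau problem at infinity plus standard interior curvature estimates for minimal surfaces in $\hy$ lets us extract a smooth subsequential limit to a minimal disk $\widetilde{S}_0$ asymptotic to $\Lambda_0$. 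By uniqueness of the minimal disk bounded by a round circle, $\widetilde{S}_0$ is a totally geodesic plane and $\lambda_0\equiv 0$, contradicting $|\lambda_n(p_n)|\geq c$.

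Third, to upgrade to the linear rate $\|\lambda\|_{L^\infty} \leq C\log(1+\epsilon)$, I would linearize. In isothermal coordinates, the traceless second fundamental form of a minimal immersion into $\hy$ is encoded by a holomorphic quadratic differential $\Phi\,dz^2$, and the principal curvature satisfies $\lambda = |\Phi|/\rho$ where $\rho$ is the conformal factor of the induced metric. On the other hand, the Bers/Epstein picture identifies the infinitesimal variation of the boundary quasicircle from a round circle with a Beltrami differential that, to first order, agrees with $\Phi/\rho^2$. Thus the map $\Lambda \mapsto \|\lambda\|_{L^\infty}$ has a bounded derivative at the round circle, and an inverse function theorem argument (using the qualitative step above to restrict to a neighborhood where the nonlinear minimal surface operator is a small perturbation of its linearization) promotes the $o(1)$ bound to the claimed $O(\epsilon)$ bound, with $C$ depending only on universal constants from the linearized problem.

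The main obstacle I anticipate is precisely this last quantitative step: compactness alone yields $\|\lambda\|_{L^\infty}=o(1)$, not the sharp linear rate. Identifying the linearization of the composite map ``quasicircle $\to$ minimal disk $\to$ principal curvature'' at the round circle, and controlling the $L^\infty$ norm (rather than some integral norm) of the curvature by the quasiconformal dilatation of the boundary, requires Schauder-type estimates and barrier/maximum-principle arguments specifically adapted to the hyperbolic minimal surface equation. Passing between the $L^\infty$ norm of a holomorphic quadratic differential on a disk and the $L^\infty$ boundary distortion is where the most delicate work lies, and is what produces the universal constants $\epsilon_1$ and $C$.
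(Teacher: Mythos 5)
First, note that the paper does not prove this statement at all: it is imported verbatim as Theorem A of Seppi \cite{saMinimalDiscsHyperbolicSpace}, so the relevant comparison is with Seppi's argument, not with anything in this text. Your Steps 1--2 are sound as far as they go: lifting to $\mathbb{H}^3$, using the convex-hull property, Lehto--Virtanen compactness of uniform quasicircles (with collapse to a point ruled out because the normalized discs pass through the origin), and interior curvature estimates do give $\|\lambda\|_{L^\infty}=o(1)$ as $\epsilon\to 0$. But that qualitative statement is strictly weaker than the theorem, whose entire content is the uniform rate with universal constants $\epsilon_1, C$.

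The genuine gap is Step 3, and you have located it yourself. Saying that the map ``quasicircle $\mapsto \|\lambda\|_{L^\infty}$'' has a bounded derivative at the round circle and invoking an inverse-function-theorem argument is not a proof: the assignment quasicircle $\mapsto$ minimal disc is not known a priori to be single-valued (uniqueness of the spanning minimal disc can fail for general quasicircles, and for small $\epsilon$ it is usually deduced \emph{from} the curvature estimate, so assuming it here is circular), let alone differentiable in a sense strong enough to control $L^\infty$ norms; moreover a compactness argument gives no lower bound on the size of the neighborhood where the nonlinear problem is a controlled perturbation of its linearization, so nothing upgrades $o(1)$ to $O(\log(1+\epsilon))$. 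Seppi's actual proof avoids linearization in Teichm\"uller-theoretic terms and is quantitative throughout: he bounds the width of the convex hull of a $(1+\epsilon)$-quasicircle by a universal multiple of $\log(1+\epsilon)$ via explicit boundary estimates, observes that the minimal disc lies in that convex hull, writes the disc as a graph over a totally geodesic plane, and applies Schauder estimates to the quasilinear graph equation to convert the resulting $C^0$ (width) bound into a $C^2$ bound, i.e. into the principal-curvature estimate. To complete your outline you would need to replace the heuristic ``$\Phi/\rho^2$ matches the Beltrami differential to first order'' with an argument of this type: a sup bound on a distance-type function measured against a totally geodesic plane, plus elliptic estimates that propagate it to the second fundamental form.
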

 
Conversely, Z. Huang and B. Wang prove 
\begin{theorem}[\citenum{hwAlmostFuchsianManifolds}, (3.16)]\label{huangwangCurvatureupperboundQF}
	If a $(1+\epsilon)$-\qf minimal surface $S$ satisfies $0<\lambda_0<1$, then
	$$ 1+\epsilon< \frac{1+\lambda_0}{1-\lambda_0}.$$ 
\end{theorem}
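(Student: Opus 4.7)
The plan is to bound the quasi-Fuchsian constant of $\partial\ts$ directly by constructing an explicit quasiconformal map whose pointwise dilatation is controlled by the principal curvature $\lambda$.

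First I would pass to the universal cover, working with a minimal disk $\ts \subset \hy$ whose principal curvatures are $\pm\lambda(x)$ with $|\lambda(x)| \leq \lambda_0 < 1$. As in Uhlenbeck's analysis of almost-Fuchsian manifolds, the strict bound $\lambda_0 < 1$ ensures that the normal exponential map $\exp^\perp: \ts \times \R \to \hy$ is a global diffeomorphism, so the parallel hypersurfaces $S_t$ foliate $\hy$. Each normal geodesic then has two distinct limits on $\satinfinity$, and the forward Gauss map $G_+: \ts \to \satinfinity$ sending $x$ to the forward endpoint of its normal geodesic is a smooth embedding onto a Jordan domain $D_+$ bounded by the limit set $\partial\ts$.

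Next I would compute the infinitesimal dilatation of $G_+$. In principal coordinates $(x_1,x_2)$ at a point where the principal curvatures are $\pm\lambda$, a Riccati-equation computation shows that principal tangent vectors along $S_t$ scale by $\cosh t - \lambda \sinh t$ and $\cosh t + \lambda \sinh t$ respectively. Rescaling by the conformal factor $2e^{-t}$ and letting $t \to +\infty$, the induced metric on $S_t$ converges to the flat metric $(1-\lambda)^2\,dx_1^2 + (1+\lambda)^2\,dx_2^2$ on $D_+$. Comparing this with the conformal class of the minimal surface's own induced metric shows that $G_+$ has pointwise dilatation $\frac{1+|\lambda|}{1-|\lambda|}$, so globally $G_+$ is $\tfrac{1+\lambda_0}{1-\lambda_0}$-quasiconformal.

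Finally, I would apply the same analysis to the backward Gauss map $G_-$ and glue $G_+$ and $G_-$ along $\partial\ts$ to produce a quasiconformal self-homeomorphism of $\satinfinity$ carrying a round circle onto $\partial\ts$, witnessing that $\partial\ts$ is a $\tfrac{1+\lambda_0}{1-\lambda_0}$-quasicircle. Since the quasi-Fuchsian constant $(1+\epsilon)$ is the \emph{infimum} over such constants and the bound $\tfrac{1+|\lambda|}{1-|\lambda|}$ is attained only on the (typically small) set where $|\lambda|=\lambda_0$, one gets the strict inequality $1+\epsilon < \tfrac{1+\lambda_0}{1-\lambda_0}$. The main obstacle I anticipate is the precise computation of the Beltrami coefficient of $G_+$: one must carefully track two distinct conformal structures (the induced conformal class on $\ts$ and the spherical conformal class on $D_+ \subset \satinfinity$) and then justify the quasiconformal extension of $G_\pm$ across $\partial\ts$, which uses the uniform bound on $\lambda$ together with standard removability results for quasiconformal maps across quasicircles.
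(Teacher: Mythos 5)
The paper does not itself prove \Cref{huangwangCurvatureupperboundQF}; it quotes equation (3.16) of Huang--Wang [\citenum{hwAlmostFuchsianManifolds}], so there is no in-paper argument to compare against. Your reconstruction via the normal exponential map, the equidistant surfaces $S_t$, and the renormalized metric at infinity is the standard Epstein--Uhlenbeck picture, and the pointwise computation --- principal vectors scale by $\cosh t\pm\lambda\sinh t$, so after rescaling by $2e^{-t}$ the forward Gauss map $G_+$ has pointwise dilatation $\tfrac{1+|\lambda|}{1-|\lambda|}$ --- is correct. (One small inaccuracy: with $\lambda$ non-constant the renormalized limit metric is not flat; a Riccati computation shows its Gaussian curvature limits to $-(1+\lambda^2)/(1-\lambda^2)$. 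This does not affect the dilatation estimate.)

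The genuine gap is the strictness of the inequality. Your construction yields a quasiconformal map $\phi:S^2\to S^2$ with $\phi(S^1)=\partial\ts$ and pointwise dilatation $\tfrac{1+|\lambda(x)|}{1-|\lambda(x)|}$. Since $S$ is closed and $\lambda$ continuous, $\lambda_0=\sup|\lambda|$ is attained, and continuity near a maximizing point forces the \emph{essential} supremum of the dilatation of $\phi$ to be exactly $\tfrac{1+\lambda_0}{1-\lambda_0}$, no matter how small the set $\{|\lambda|=\lambda_0\}$ is; ``attained only on a typically small set'' cannot lower an essential supremum. So $\phi$ only certifies that $\partial\ts$ is a $\tfrac{1+\lambda_0}{1-\lambda_0}$-quasicircle, and taking the infimum over competitors gives $1+\epsilon\le\tfrac{1+\lambda_0}{1-\lambda_0}$, not $<$. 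To get strictness you must show $\phi$ is not extremal among quasiconformal maps carrying $S^1$ to $\partial\ts$. One route: in conformal coordinates the Beltrami coefficient of each of $\phi_\pm$ has modulus $|\lambda|$, which is non-constant because the Hopf differential of a closed minimal surface of genus $\ge 2$ is a nonzero holomorphic quadratic differential and hence has zeros, so $\lambda$ vanishes somewhere; a Beltrami coefficient whose modulus is not essentially constant fails the Hamilton--Krushkal necessary condition for extremality, so strictly better competitors exist on each side of $\partial\ts$. This is a separate argument that your write-up does not supply.
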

\noindent
Both $\epsilon=0$ and $\lambda_0=0$ correspond to totally geodesic surfaces. In a hyperbolic manifold, by the Gauss-Bonnet theorem and the Gauss equation, the area $T$ of a minimal surface $S$ is $\leq 4\pi(g-1)$, with equality if and only if $S$ is totally geodesic. Moreover, combining with \Cref{seppi curvature estimates}, we can derive that the area of a $(1+\epsilon_0)$-\qf surface is $\geq (\frac{1}{1+C^2\epsilon_0^2})4\pi(g-1)$. 
 This justifies \Cref{filling-nbhd}. 

A minimal surface with $\lambda_0 <1$ is \textit{almost-Fuchsian}. Uhlenbeck \cite{ukclosedminSurfaceHyperbolic} pioneered the studies of such surfaces, using the metric and the second fundamental form of an almost-Fuchsian surface in a quasi-Fuchsian $3$-manifold $M$ to parameterize the metric tensor of $M$. An unpublished result of Thurston states that almost-Fuchsian surfaces are $\pi_1$-injective, see [\citenum{lcSmallCurvatureSurface}, Theorem 5.1]. 
\begin{theorem}[\cite{ukclosedminSurfaceHyperbolic} and \cite{saMinimalDiscsHyperbolicSpace}]\label{uniqueness}
	Let $M$ be a closed hyperbolic $3$-manifold. There exists an   $\epsilon_2>0$ (depending only on $M$) such that every $(1+\epsilon
	_2)-\qf$ surface is homotopic to a unique minimal surface. 
\end{theorem}
This theorem allows us to speak of the \emph{distribution} of \ag surfaces. 
Throughout this work, we assume $\epsilon_0$ (e.g., in \Cref{nearly geodesic are filling}) satisfies $\epsilon_0\leq \min\{\epsilon_1,\epsilon_2\}$. 
 For recent advances on almost-Fuchsian manifolds and surfaces, see \cite{hlsUniqueness} and the references therein. 

When analyzing filling properties and geodesic intersections, there is no loss of generality in considering orientable surfaces of minimal genus within each class modulo homotopy and commensurability. On a hyperbolic surface $S$, a primitive closed geodesic $\gamma$ and its cover $\gamma^n$ span identical subsets of $S$. Similarly, it follows from the uniqueness of almost Fuchsian minimal surface in its homotopy class that a \las primitive almost Fuchsian surface $S$ and commensurable \las surfaces span identical subsets of $M$, and their preimages in $\hy$ are the same by [\citenum{fhsLeastAreaImmersion}, Theorem 5.3]. 
  A hypersurface of a Riemannian manifold is a closed codimension-$1$ submanifold with the induced Riemannian metric.
\subsection{A natural metric on the Grassmann bundles}\label{grassmann bundle}
Following \cite{pgGeodesicFlows}, we construct a natural metric on the Grassmann $2$-plane bundle for the convenience of the readers. 
 A point in $\gtm$ is denoted by $(x, \Pi)$ where $x\in M$ and $\Pi\subset T_xM$ is a $2$-plane. The unit tangent bundle $T^1M$ is a $2$-fold covering of $\gtm$. 

For this subsection only, let $\pi: \gtm \rightarrow M$ be the canonical projection, i.e., if $\theta =(x, \Pi_1) \in \gtm$, then $\pi(\theta)=x$. There exists a canonical subbundle of $T\gtm$ called the \textit{vertical subbundle}: its fiber at $\theta$ is given by the tangent vectors of curves $\sigma:(-\eta, \eta) \rightarrow \gtm$ of the form $\sigma(t) = (x, \Pi_1+t\Pi_2)$, where $\Pi_2 \in (\gt)_xM$. Equivalently, 
$$V(\theta)= \ker(d_{\theta}\pi).$$ 
 We now use the Riemannian metric on $M$ to construct a metric on $\gtm$ and a complementary horizontal subbundle. There is a horizontal lift 
$$L_{\theta}: (\gt)_xM \rightarrow T_{\theta}\gtm$$
using parallel transport as follows. Given $\Pi \in \gtxm$ and $\alpha: (-\eta, \eta)\rightarrow M$ such that $\alpha'(0) \perp \Pi$ in $T_xM$, let $Z(t)$ be the parallel transport of $\Pi$ along $\alpha$. Let $\sigma: (-\eta, \eta)\rightarrow \gtm$ be the curve 
$$\sigma(t) = (\alpha(t), Z(t)).$$ 
Then, with $\theta=(x, \Pi)$, 
$$ L_{\theta}(\Pi)\coloneqq \sigma'(0) \in T_{\theta}\gtm. $$ 
The horizontal subbundle is defined as $\im(L_{\theta})$. By [\citenum{pgGeodesicFlows}, Lemma 1.15], we see that as vector spaces 
$$T_{\theta}\gtm = H(\theta) \oplus V(\theta). $$
Equip each fiber with $\frac{1}{2\pi}$ times the standard Riemannian metric on $\R P^2$, so that the volume measure of each fiber $(\gt)_x$ is $1$.
\begin{defi}[A natural invariant metric on the Grassmann bundle $\gtm$]\label{grassmann metric}
	Using the decomposition $T_{\theta}\gtm = H(\theta) \oplus V(\theta)$, we define the Sasaki metric on $\gtm$ by declaring $H(\theta) \oplus V(\theta)$ to be orthogonal. Denote this metric by $\langle \cdot \, ,\cdot \rangle_{\gtm}$. 
\end{defi}
The orthogonal splitting endows $\gtm$ with a natural metric $\langle \cdot \, ,\cdot \rangle_{\gtm}$ satisfying the following properties:
\begin{enumerate}
	\item the map $\pi: \gtm \rightarrow M$ is a Riemannian submersion with totally geodesic fibers,
	\item the projection of a geodesic in $\gtm$ to $M$ is a geodesic, in particular, 
	$$ \dist_{\hy}(x_1, x_2) \leq \dist_{\gt\hy}((x_1, \Pi_1), (x_2, \Pi_2)),$$ and
	\item the lift of this metric to $\gt(\hy)$ is invariant under $\isom(\hy)$, and agrees with hyperbolic metric on totally geodesic planes in $\gt(\hy)$.
\end{enumerate}
 The Sasaki metric induces a probability measure on $\gtm$ called the \textit{Liouville measure}, invariant under the geodesic flow. 

Let $f: S\rightarrow M$ be an \poi quasi-Fuchsian surface. There is a hyperbolic metric on $S$ with respect to which $f$ is a conformal minimal immersion. Here $f$ also induces a natural map to $\gtm$, which we still denote by $f$. 

\begin{defi}[Probability measure induced by minimal surfaces]
	For a measurable subset $O\subset \gtm$, and $f: S\rightarrow \gtm$, we have
	$$
	\ds\mu_{S}(O)= \frac{\area(f^{-1}(O\cap \gt S))}{\area(S)}.
	$$
\end{defi}
There is an identification between $\psltc$ and the frame bundle $\fhy$ over $\hy$. Moreover, $\psltc$ acts on $\fhy$ by isometries freely and transitively on the left. There is a right action on $\fhy$ by various flows, e.g., geodesic flow and unipotent flows. The bundle $\gt \hy$ is a natural quotient of $\fhy$: $\gt\hy=\fhy/SO(2)$. Each element in $\gt \hy$ is tangent to a unique geodesic plane $P$, an isometric embedding of $\hp$ in $\hy$. Since each $\hp$ is a quotient of $\psltr$, by a $\psltr$-orbit in $\gt\hy$, we mean the projection of an orbit of $\psltr$ acting on $\fhy$ to $\gt\hy$. A lattice $\Gamma \leq \psltc$ acts on $\fhy$ or $\gt \hy$ on the left, and the orbits descend to $\gtm\coloneqq \Gamma \backslash \fhy /SO(2)$. 

\subsection{Fundamental domains and fillingness}
We recall some results about fundamental domains of a hyperbolic manifold $M$, which help us visualize how a filling surface interacts with geodesics and $M$. 

By [\citenum{rjFoundationsHyperbolicManifolds}, Theorem 6.6.13], a discrete subgroup of $\isom(\hs)$ admits a locally finite Dirichlet fundamental domain $D$. By [\citenum{rjFoundationsHyperbolicManifolds}, Exercise 6.6.2], the discrete group $\Gamma=\pi_1(M)$ is generated by side-pairings: 
$$\{g\in \Gamma : \bar{D} \cap g\bar{D} \subset \partial\bd \tn{ and}\neq \emptyset\}.$$
By the first paragraph of [\citenum{rjFoundationsHyperbolicManifolds}, 6.7], there exists a convex Dirichlet fundamental domain $D$. Furthermore,  [\citenum{rjFoundationsHyperbolicManifolds}, Theorem 6.7.1] proves that $\bar{D}$ is a convex polyhedron. Finally,  [\citenum{rjFoundationsHyperbolicManifolds}, Theorem 6.7.4(2), Theorem 6.7.5] prove that the fundamental domain $D$ is exact: for any side $F$ of $D$, there exists a unique $g\in \Gamma$ such that 
$$ F=\bar{D} \cap g\bar{D}.$$
This culminates in the following lemma. 
\begin{lem}\label{fundamental domain convex with side-pairings}
	A closed hyperbolic $n$-manifold admits a fundamental domain $D \subset \hs$ which is a convex polyhedron, whose sides are partitioned into pairs $\{ \{F_1, F_1'\}, \cdots, \{F_k, F_k'\}\}$, such that for each pair of sides $F_i, F_i'$, there exists a unique $g_i \in \Gamma$ which maps $F_i$ isometrically onto $F_i'$. Moreover, the group $\Gamma$ is generated by  $\{g_1, \cdots, g_k\}$. 
\end{lem}
A surface representative $S\hookrightarrow M$ is \textit{filling} if $M-S$ consists of components that are contractible in $M$.
A surface $S\hookrightarrow M$ is \textit{genuinely filling} if $M-S$ consists of topological balls. We always assume surfaces in hyperbolic $3$-manifolds are quasi-Fuchsian. Let $\pi: \hy \rightarrow M$ be the covering projection. Then $\pi(\partial D)$ is a \gefi closed surface which is homotopically trivial. This motivates us to define a filling class of surfaces as a property that should hold for every representative. We thank Chao Wang for clarifications regarding  filling surfaces versus filling homotopy classes. A filling class $[S]$ intersects every homotopically nontrivial loop of $M$ which means $\pi_1(M-S)\rightarrow \pi_1(M)$ is trivial. We record the following useful lemma from Rubinstein-Sageev. 
\begin{lem}[\citenum{rsFillingSurfaces}]\label{filling representative}
	Let $M$ be a closed \htms and $[S]$ a filling class of surfaces. Then a \las representative $S$ is \gefin. 
\end{lem}
Thus, similar to the boundary of a fundamental domain, any surface in a filling class has an area lower bound by the linear isoperimetric inequality (see \Cref{isoperimetric inequality filling surface has area at least volume}). 
	\section{Strongly filling surfaces in hyperbolic $3$-manifolds}\label{proofs}
We start with some lemmas on Fuchsian and filling surfaces, whose proofs also help us visualize the proof of \Cref{nearly geodesic are filling}. 
\subsection{Some lemmas on intersections and fillingness}
In this subsection, with a slight abuse of language, we also use $[S]$ to denote a homotopy class of surfaces (instead of modulo both homotopy and commensurability). We first define the intersection number between $[\gamma] \in \pi_1(M)$ and a homotopy class $[S]$ of immersed surfaces. By applying homotopies, we may assume an immersed closed curve $\gamma:S^1 \rightarrow M$ and an immersed surface $f:S\rightarrow M$ intersect at finitely many points $x_1, \cdots, x_n$, such that for each $i\in\{1, \cdots, n\}$, there exist $j\in\{1, \cdots, I_i(\gamma)\}$, $k\in\{1, \cdots, I_i(S)\}, t_{ij}\in S^1$, and $y_{ik} \in S$ with $$\gamma(t_{ij})=x_i = f(y_{ik}) \tn{ and } \gamma'(t_{ij}) \oplus df_{y_{ik}}(T_{y_{ik}}S)=T_{x_i}M.$$ 
The \textit{intersection number} of $\gamma$ and $S$ is defined by counting with multiplicity: 
$$
	|\gamma \cap S| = \sum_{i=1}^n I_i(\gamma) \cdot I_i(S).
$$

\begin{defi}\label{geometric intersection number}
 The intersection number between a homotopy class $[\gamma] \in \pi_1(M)$ and a homotopy class  $[S]$ of surfaces is $$[\gamma]\cdot	[S]\coloneqq\min\{|\gamma' \cap S'|:\gamma' \in [\gamma], S'\in [S] \}.$$ 
	\end{defi}
 A Fuchsian surface minimizes the intersection number with a closed geodesic in its homotopy class.
\begin{lem}\label{intersection between geodesic and codimension one submanifold}
	Let $[\gamma]\in \pi_1(M)$ and $[S]$ be a homotopy class of a Fuchsian surface. Then the intersection number $[\gamma]\cdot [S]$ is realized by the number of transverse intersections of their geodesic representatives $\gamma$ and $S$
	$$
	[\gamma]\cdot	[S] = |\gamma\cap S|.
	$$ 
\end{lem}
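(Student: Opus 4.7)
The plan is to lift everything to the universal cover $\widetilde{M}=\hy$, set up a bijection between intersection points in $M$ and double cosets of $\pi_1(M)$, and then observe that for the geodesic representatives each contributing double coset gives \emph{exactly one} transverse intersection point, while for arbitrary representatives the same set of double cosets each contribute \emph{at least one} intersection point. Summing gives the desired inequality $|\gamma'\cap S'|\geq |\gamma\pitchfork S|$, which together with the obvious $|\gamma\pitchfork S|\geq [\gamma]\cdot[S]$ yields equality.

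Concretely, I would first fix a lift $\tilde{\gamma}\subset\hy$ stabilized by $\langle\gamma\rangle\leq\pi_1(M)$ and a base lift $\tilde{S}$ so that other lifts of $S$ are indexed as $g\tilde{S}$, $g\in\pi_1(M)/\pi_1(S)$. Using the deck action, one checks that the set of transverse intersection points $\gamma\pitchfork S$ in $M$ is in bijection with the set of double cosets
\[
\{\pi_1(S)\,g\,\langle\gamma\rangle\in\pi_1(S)\backslash\pi_1(M)/\langle\gamma\rangle : \tilde{\gamma}\cap g\tilde{S}\neq\emptyset\},
\]
provided $[\gamma]\notin\pi_1(S)$; the edge case $[\gamma]\in\pi_1(S)$ is handled, as discussed earlier in the paper, by pushing $\gamma$ off $S$ using orientability. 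Because $\tilde{\gamma}$ is a geodesic and each $g\tilde{S}$ is a totally geodesic plane, they meet transversally in at most one point, and they meet if and only if the two endpoints of $\tilde{\gamma}$ on $\partial\hy$ lie on opposite sides of the round circle $\Lambda(g\tilde{S})\subset\partial\hy$. This gives the exact count of $|\gamma\pitchfork S|$.

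Next, for arbitrary transverse representatives $\gamma'\in[\gamma]$ and $S'\in[S]$, I choose lifts $\tilde{\gamma}'$ and $g\tilde{S}'$ compatible with the same stabilizers $\langle\gamma\rangle$ and $g\pi_1(S)g^{-1}$. Since $M$ is closed, $\tilde{\gamma}'$ shares its endpoints on $\partial\hy$ with $\tilde{\gamma}$, and $g\tilde{S}'$ has the same limit set as $g\tilde{S}$ (the limit set depends only on the conjugate subgroup). For each double coset for which the geodesic intersection is non-empty, the endpoints of $\tilde{\gamma}'$ are therefore separated by the Jordan curve $\Lambda(g\tilde{S}')$ on $\partial\hy$. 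Since $\pi_1(S)$ is a quasi-Fuchsian (Fuchsian) subgroup, the lift $g\tilde{S}'$ is a properly embedded disk in $\hy$ whose closure in $\overline{\hy}\cong\mbbb$ together with its limit Jordan curve separates $\mbbb$ into two closed balls; hence the path $\tilde{\gamma}'$ joining its two endpoints (which lie in different open hemispheres of $\partial\mbbb\setminus\Lambda$) must cross $g\tilde{S}'$ at least once. Summing over the relevant double cosets yields $|\gamma'\cap S'|\geq |\gamma\pitchfork S|$.

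The main obstacle is the final topological separation step, namely verifying that $g\tilde{S}'$ together with its limit Jordan curve in $\overline{\hy}$ actually separates the closed ball. For an embedded quasi-Fuchsian surface this is a direct application of Jordan--Brouwer; for a merely immersed representative $S'$ one can either pass to the embedded minimal representative in $[S]$ (which exists by \cite{suMinimalImmersionClosedRiemann,syExistIncomMinTopo}) and use the homotopy invariance of the geometric intersection number, or work with the $\mathbb{Z}/2$-algebraic intersection of $\tilde{\gamma}'$ with the proper map $\tilde{f}\colon \hp\to\hy$ extending continuously to $\Lambda$, bounding the geometric count from below. The other minor nuisance is the edge case when $[\gamma]\in\pi_1(S)$, which is disposed of by the push-off argument from the paragraph preceding the statement.
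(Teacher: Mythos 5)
Your proposal is correct and takes essentially the same route as the paper: lift to $\hy$, observe that a transverse intersection of the geodesic representatives corresponds to the limit circle of the totally geodesic plane separating the ideal endpoints of $\tilde{\gamma}$, and note that homotopies of $\gamma$ and $S$ fix this data at infinity (Morse lemma / bounded homotopies in a closed manifold), so each such intersection persists for arbitrary representatives; your double-coset bookkeeping simply replaces the paper's count of preimages through a fixed fundamental domain. One small caveat: of your two suggested fixes for the immersed-lift issue, only the $\mathbb{Z}/2$-algebraic intersection argument is legitimate, since ``passing to the minimal representative and using homotopy invariance of the geometric intersection number'' is circular --- that invariance is exactly what the lemma asserts.
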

\begin{proof}
	Lift $M$ to a fundamental domain $D \subset \hy$. The preimages of $\gamma$ and $S$ in $D$ are finitely many geodesic arcs and geodesic polygons. If an arc which is a subset of a geodesic line $\tg $ has a transverse intersection with a plane $P$, then the circle $\partial P$ separates the two endpoints of $\tg$. By the Morse lemma, any homotopy of $\gamma$ induces a homotopy of $\tg$, fixing its limit points. Similarly, any homotopy of $S$ induces a homotopy of $P$, fixing the circle at infinity. Thus no homotopy decreases the intersection number between $\beta $ and $P$. Applying this argument to all transverse intersections between the preimages of $\gamma$ and the preimages of $S$ through $D$, we conclude that the minimal number is realized by taking the geodesic representatives of both $\gamma$ and $S$. 
\end{proof}
We call the intersection between a Fuchsian surface and a closed geodesic in a hyperbolic manifold \textit{essential}. The strong interplay between topology and geometry is a recurring theme in hyperbolic geometry. If $M=S^2 \times S^1$ and $\gamma\subset S^2$ a geodesic great circle, then $S\coloneqq\gamma\times S^1 \subset M$ is a compressible totally geodesic torus. Thus for another geodesic $\eta$, $[\eta]\cdot [S]<|\eta \cap S|$ and  \Cref{intersection between geodesic and codimension one submanifold} fails in general. 

As a corollary of \Cref{intersection between geodesic and codimension one submanifold}, we have 
\begin{lem}\label{Fuchsian transverse intersect means complement has no that loop}
If a Fuchsian surface $S$ intersects a geodesic $\gamma$ transversally, then for any $S'$ homotopic to $S$, $\pi_1(M-S')$ does not contain $[\gamma]$. Moreover, if a Fuchsian surface $S$ is \gefin, its homotopy class and \cbility class are strongly filling. 
\end{lem}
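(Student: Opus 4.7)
The first assertion is a direct consequence of \Cref{intersection between geodesic and codimension one submanifold}. Suppose for contradiction that some $S'\in [S]$ has $[\gamma]\in \pi_1(M-S')$. Then $[\gamma]$ has a representative loop $\gamma'$ lying entirely in $M-S'$, so $|\gamma'\cap S'|=0$, and by \Cref{geometric intersection number} the homotopy invariance of the intersection number gives $[\gamma]\cdot [S] = [\gamma]\cdot [S']=0$. But \Cref{intersection between geodesic and codimension one submanifold} yields $[\gamma]\cdot [S]=|\gamma\pitchfork S|\geq 1$ by the transversality hypothesis, a contradiction.

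For the corollary, let $S_1$ be in the commensurability class of $S$. Since commensurable $\pi_1$-injective surfaces share the same preimage in $\tm$, $S_1$ equals $S$ as a subset of $M$ and is itself Fuchsian. A filling Fuchsian surface $S$ transversally intersects every closed geodesic $\gamma\not\subset S$: if $\gamma$ is disjoint from $S$ then $\gamma\subset M-S$ with $[\gamma]\neq 1$, contradicting contractibility of the components of $M-S$; and tangential-but-not-contained intersections are ruled out in $\hy$ because a geodesic tangent to a totally geodesic plane at a point must lie in the plane. Hence $S_1$ transversally intersects every closed geodesic $\gamma\not\subset S_1$. Applying the first assertion to $S_1$ and to each such $\gamma$, for every $S_1'\in [S_1]$ the image of $\pi_1(M-S_1')\to \pi_1(M)$ contains no nontrivial $[\gamma]$; since $M$ is closed hyperbolic, every nontrivial conjugacy class in $\pi_1(M)$ is represented by a closed geodesic, so this image is trivial.

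The main obstacle I anticipate is upgrading triviality of $\pi_1(M-S_1')\to \pi_1(M)$ to genuine contractibility of each component of $M-S_1'$. The plan is to use the trivial-image property to lift each component $C$ of $M-S_1'$ homeomorphically to a component $\widetilde{C}$ of $\tm-\pi^{-1}(S_1')$, and then argue that $\widetilde{C}$ is an open cell: since $S_1'$ is homotopic to the Fuchsian $S_1$ whose preimages decompose $\hy$ into convex cell-like regions, a $C^0$-stability argument for this polyhedral decomposition under the homotopic perturbation $S_1\leadsto S_1'$ identifies $\widetilde{C}$ with such a cell, delivering the desired contractibility of $C$.
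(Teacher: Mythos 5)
Your first paragraph is the paper's own (implicit) argument: the lemma is stated as a corollary of \Cref{intersection between geodesic and codimension one submanifold}, and your contradiction via homotopy invariance of the intersection number of \Cref{geometric intersection number} is exactly what is intended. The problem is in your treatment of the corollary. You only establish transverse intersections of the filling Fuchsian surface with closed geodesics $\gamma \not\subset S_1$, and then conclude that the image of $\pi_1(M-S_1')\to \pi_1(M)$ is trivial because every nontrivial conjugacy class is represented by a closed geodesic. But the classes whose geodesic representatives lie \emph{on} $S_1$ --- for instance every nontrivial class conjugate into $\pi_1(S_1)$ --- are not covered by anything you proved, so ``this image is trivial'' does not follow. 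These classes are precisely the delicate ones, and the paper's mechanism for them is the self-intersection of a filling Fuchsian surface: in \Cref{filling fuchsian surface intersects every geodesic} a geodesic contained in a face $F_j$ of $S$ is forced, by compactness of $F_j$, to cross the self-intersection locus and hence to meet another sheet $F_k$ of $S$ transversally; that transverse intersection is what must be fed into the first assertion for such $\gamma$. Without this ingredient your corollary is not proved.

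Your closing plan for upgrading triviality of $\pi_1(M-S_1')\to\pi_1(M)$ to contractibility of components via a ``$C^0$-stability'' of the polyhedral decomposition would also fail as stated: an arbitrary $S_1'$ homotopic to $S_1$ need not be close to $S_1$ in any metric sense (the homotopy may sweep parts of the surface far across $M$ and back), so the complement of $S_1'$ is not a small perturbation of the complement of the geodesic representative. The paper does not attempt such an upgrade; throughout (e.g.\ in the proof of \Cref{theorem large Fuchsian filling} and in \Cref{dense implies filling}) being filling is used operationally as the statement that $\pi_1(M-S')$ contains no nontrivial class of $\pi_1(M)$, which is exactly what the first assertion yields once \emph{all} closed geodesics, including those lying on $S$, have been handled. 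So the concrete fixes are: invoke the self-intersection argument for geodesics on $S_1$, and drop the stability argument in favor of the paper's operative notion of filling.
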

\noindent
Thus if a Fuchsian $S$ transversally intersects every closed geodesic of $M$, $[S]$ is filling. This local geometric criterion for a homotopy class of geodesic surfaces to be filling also generalizes to certain non hyperbolic manifolds, e.g., the proof of \Cref{three families to be filling}. 
\begin{rem}
\Cref{intersection between geodesic and codimension one submanifold} and \ref{Fuchsian transverse intersect means complement has no that loop} do not generalize to \las non-Fuchsian surfaces in hyperbolic manifolds: \Cref{non geodesic least area surface does not minimize intersection number} proves every such surface has redundant intersections with some closed geodesics. 
\end{rem}

\begin{lem}\label{covering preserves filling surfaces}
	Let $S\hookrightarrow M$ be a filling surface and $M'$ a cover of $M$. Then the preimage of $S$ in $M'$ is a (possibly disconnected or noncompact) filling surface $S' \subset M'$. 
\end{lem}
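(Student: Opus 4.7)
The plan is to work directly from the definition: $S'$ is filling in $M'$ if every connected component of $M' - S'$ is contractible. Let $p\colon M' \to M$ be the covering projection and set $S' \coloneqq p^{-1}(S)$. Since $p$ is a local diffeomorphism, $S'$ is a (possibly disconnected, possibly noncompact) $\pi_1$-injective surface, and as sets
$$M' - S' = p^{-1}(M - S).$$

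The key input is the standard fact that a covering over a simply connected base is trivial. Fix a connected component $C$ of $M - S$; by the filling hypothesis, $C$ is contractible, hence simply connected. The restriction $p\colon p^{-1}(C) \to C$ is a covering map, so it splits as a disjoint union of sheets, each mapped homeomorphically onto $C$. In particular, every connected component of $p^{-1}(C)$ is homeomorphic to $C$ and therefore contractible.

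It remains to match components across $p$. Let $D$ be any connected component of $M' - S' = p^{-1}(M - S)$. Then $p(D)$ is connected and disjoint from $S$, so $p(D)$ is contained in a unique component $C$ of $M - S$, giving $D \subset p^{-1}(C)$. Since $p^{-1}(C)$ is both open and closed in $p^{-1}(M - S)$, the component $D$ coincides with a connected component of $p^{-1}(C)$, which is contractible by the previous paragraph. Hence every component of $M' - S'$ is contractible, as required. The argument is structurally straightforward; the only thing to be careful about is the bookkeeping of identifying components of $M' - S'$ with components of $p^{-1}(C)$ across the cover, which poses no real obstacle.
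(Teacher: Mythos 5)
Your proof is correct, and it is in fact more careful than the paper's own argument. The paper proves the lemma by contradiction in one line: if a component $B$ of $M'-S'$ were not contractible, then its image $\pi(B)\subset M-S$ would be not contractible, contradicting the filling hypothesis. Taken literally, that step is not valid (a continuous image of a non-contractible space can perfectly well be contractible); what makes it work is exactly the covering-space input you supply. You argue in the opposite direction: each component $C$ of $M-S$ is contractible, hence simply connected (and locally path-connected, being open in a manifold), so the restricted covering $p^{-1}(C)\to C$ is trivial and every component of $p^{-1}(C)$ is a homeomorphic, hence contractible, copy of $C$; the bookkeeping step identifying components of $M'-S'=p^{-1}(M-S)$ with components of the various $p^{-1}(C)$ is handled correctly via openness and closedness of $p^{-1}(C)$ in $p^{-1}(M-S)$. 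So while both arguments compare complementary components across the cover, your version pulls contractibility back through the triviality of coverings over simply connected bases and is the rigorous form of what the paper only sketches; nothing is missing.
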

\begin{proof}
	Denote by $\pi: M' \rightarrow M$ the covering projection. If $S' = \pi^{-1}(S) $ is not filling, then there exists a non-contractible component $B$ of $M'-S'$. Then $\pi(B)\subset M-S$ is non-contractible, which is a contradiction.  
\end{proof}
 Let $\grm$ be the Grassmann $2$-plane bundle over $M$. We denote by
 \begin{enumerate}
 	\item $\pi: \hy \rightarrow M$ the universal covering map, and 
 	\item $\pr: \grm \rightarrow M$ the natural projection.
 \end{enumerate}
  Define the \textbf{set of planes} in $\grm$ by 
\begin{equation}
	\planes(M)\coloneqq \grm /\sim
\end{equation}
where $(x_1, \Pi_1) \sim (x_2, \Pi_2)$ if they are belong to the same $\psltr$-orbit in $\gtm$. This is a three-dimensional non Hausdorff space. By a slight abuse of language, we identify a plane with the subset it is tangent to in $\gtm$. If $\Pi_x$ is a $2$-plane tangent at $x$, then $P(\Pi_x)$ is the plane in $\gtm$ passing through $\Pi_x$. By identifying the frame bundle of $\hy$ as $\psltc$, we identify $\planes(\hy) \cong \psltc/\psltr$ and $\planes(M)=\Gamma\backslash \psltc /\psltr  $. The projection map $\grm \rightarrow \planes(M)$ is continuous, and the fiber of the projection map is isomorphic to $\hp$. 

For convenience later, if a plane $P$ corresponds to a closed Fuchsian surface $S$, we call $P$ \emph{discrete}. The genus of a discrete plane is then defined as the genus $\gs$. 
 Since there are only finitely many Fuchsian surfaces of genus up to $g$, and no distinct \cbility classes of Fuchsian surfaces are tangent, the set of planes up to genus $g$ is a \textbf{disconnected closed set} in $\grm$ (when this set has more than one planes). A subset $B$ of $\grm$ is  filling if $\pr(B)\subset M$ is filling. For $x\in M$ and $\Pi_x \in \grm$ tangent to a plane $P$, we say $\Pi_x$ intersects a geodesic $\gamma$ transversally in $M$ if $\gamma$ and $P$ intersect at some point transversally. Thus for a filling Fuchsian surface $S$, any $x\in S$, and any geodesic $\gamma \subset S$, $\Pi_x$ intersects $\gamma$ transversally by \Cref{filling fuchsian surface intersects every geodesic}. 
\subsection{A filling \toge surface is \stf}
Recall that a homotopy class $[S]$ of surfaces in a hyperbolic manifold $M$ \textit{separates} $p, q \in  \partial \widetilde{M}$ if any $S'\in [S]$ has a cover $\widetilde{S}$ such that $\partial \widetilde{S}$ separate $p, q \in \partial \widetilde{M}$. Contractible surfaces in $M$ can be homotoped to be filling.  However, [\citenum{rsFillingSurfaces}, Lemma 2.1] proved that if a Fuchsian surface $S$ is filling, its class $[S]$ is \stf. We present a slightly simpler proof. 
\begin{lem}\label{filling fuchsian surface intersects every geodesic}
	Let $M$ be a closed hyperbolic $n$-manifold. If a totally geodesic hypersurface $S\subset M$ is filling, it has some transverse intersections with every geodesic in $M$. In particular, a filling totally geodesic hypersurface is \stf.
\end{lem}
\begin{proof}
	Let $S$ be a filling totally geodesic hypersurface. By a plane of $\mathbb{H}^n$, we mean a totally geodesic hyperplane. Since $S$ is immersed, each pair of planes in $\mathbb{H}^n$ which cover $S$ has either empty or transverse intersections. Suppose 
	$$M-S =\cup_{i=1}^n B_i , \tn{ where } B_i \tn{ is open and contractible in }M, $$ 
	and $\partial B_i= \cup_{j=1}^m F_j$ where $F_j$ are geodesic polyhedrons.
	Since $B_i$ is contractible in $M$, $B_i$ lifts isometrically to $\hs$, which implies that $B_i$ is convex, and hence contractible as a  topological space itself. Thus $S$ is \gefin. 
	
	Let $\tg$ be a geodesic in $\mathbb{H}^n$ and $\gamma = \pi(\tg)$. The geodesic $\gamma$ cannot entirely lie in any $B_i$, since otherwise, by lifting $B_i$ and $\gamma$ to $\mathbb{H}^n$, $\tg$ is a subset of a bounded set in $\mathbb{H}^n$, a contradiction. 
	
	Thus $\gamma\cap \partial B_i \neq \emptyset$ for some $i$.  If $\gamma$ transversally intersects the interior of some polyhedron $F_j$ transversally, by lifting $\gamma$ and $F_j$ to $\hs$, we find a plane tangent to $\widetilde{F_j}$ which separates $\partial\tg$. 
	
	Each $F_j$ is compact and contractible. If $\gamma \subset F_j$, since $\gamma$ is complete, $\gamma$ must intersect an $(n-2)$-dimensional polyhedron $\alpha \subset \partial F_j$. However, $\partial F_j$ is a subset of the self-intersection locus of $S$, which means there is another face $F_k$ so that $\alpha \subset F_j \cap F_k$. Thus $\gamma$ intersects $F_k$ transversally. By lifting, we find a plane separating $\partial\tg$.  
\end{proof}
\begin{cor}
Let $M$ be a closed hyperbolic $n$-manifold. A totally geodesic hypersurface $S\hookrightarrow M$ is filling if and only if every component of $\tm - \pi^{-1}(S)$ is bounded. 
\end{cor}

\begin{rem}
	The argument above relies on the analysis of the intersection of a geodesic $\gamma$ and a geodesic face $\subset$ a plane $P$ and the dichotomy that either $\tg$ is a subset of $ \tp$ or $\tg$ transversally intersects $\tp$. This gives a geometric criterion for a homotopy class of totally geodesic surfaces to be filling in certain aspherical manifolds, see \Cref{non hyperbolic filling} for Euclidean and $\hp\times \R$ $3$-manifolds.  
\end{rem}

\subsection{All but finitely many totally geodesic hypersurfaces are strongly filling}\label{proof Large Fuchsian filling}
One way to see that as the genus of discrete planes grows, they must cut $M$ into more and more components is as follows. Using \Cref{fundamental domain convex with side-pairings}, $M$ admits a fundamental domain $D\subset \hy$, which is a compact convex polyhedron of diameter $d$. The intersection of $D$ with any plane in $\hy$ has an area upper bound $2\pi (\cosh \frac{d}{2}-1)$ using the polar coordinates \eqref{polar coordinates}. As the area of $S$ grows, the preimage of $S$ in $D$ consists of more and more geodesic polygons in $D$. \cite{msErgodicInvariantMeasure} implies that these polygons are uniformly distributed and thus must cut $M$ into components of decreasing diameter. 

For $R>0$, define two positive functions $$h_1(R)=\cot^{-1}\left(\frac{\sinh \frac{R}{4} \cosh \frac{R}{2}}{\sinh \frac{R}{2}} \right) -\cot^{-1}\left(2\cosh^2 \frac{R}{4} \right) $$ and $$h_2(R)= \frac{\pi(1-\cos\theta_0)}{4}(R\cosh R-R),$$
where $\theta_0= \cot^{-1}\left(2\cosh^2 \frac{R}{4}\right)$. 
The following lemma works similarly in every dimension. For the clarity of presentation, we fix dimension $n=3$. 
\begin{lem}\label{transverse intersection is an open property}
	Let $s$ be a geodesic segment of length $2R\leq 2\inj(M)$. Then the open subset of $\grm$ whose points intersect the central half of $s$ transversally with an angle $> h_1(R)$ has Liouville measure $\geq h_2(R)$. 
\end{lem}
\begin{proof}
	Let $s=s(z)$ be a unit-speed parameterization where $z\in I'=[-R,R]$ and $s(I')$ is embedded. Let $I=[\frac{-R}{2}, \frac{R}{2}]$ parameterize the central half of $s$. Extend $z$ to the cylindrical coordinates $(r, \theta, z)$  centered at $s$, where $r, \theta$ form the $2$-dimensional polar coordinate for the hyperbolic disk orthogonal to $s$ at $s(z)$. The hyperbolic metric is given by $\diff r^2+\sinh^2 r \diff \theta^2 +\cosh^2 r \diff z^2$ (see [\citenum{bdNorms}, p. 548]), where $r=0$ corresponds to the geodesic $s$, and $r=\text{constant}$ gives concentric Euclidean annulus. Let $\phi_1, \phi_2$ be the spherical coordinates for the unit tangent space at $(r, \theta, z)$, where $\phi_1\in [0, \pi]$ measures the angle relative to $z$-axis and $\phi_2\in [0, 2\pi]$ corresponds to $\theta$. Hence 
	$(r, \theta, z, \phi_1, \phi_2)$ form local coordinates for $(x,v)$ in the unit tangent bundle $T^1M$ which double covers $\gtm$. 
	
	We now describe an open subset $O(s) \subset \gtm$ whose points are tangent to planes transversally intersecting $s(I)$ with angle $> h_1(R)$. Using the ball model, we place the segment $s$ at a vertical $\hp$ so that its perpendicular bisector passes through the origin. Then $s(I)=\{(0,0, z)|z\in [-R/2, R/2]\}$. The angle $\theta=0$ (resp. $\theta=\pi$) corresponds to the right (resp. left) half-space of the complement of the geodesic line containing $s(I)$ in $\hp$. The coordinates $r\in [0, R/2]$ and $\theta \in [0,2\pi]$ parameterize a solid cylinder $A$ centered at $s$. Let $\theta_0$ be the angle formed by the segment $\{(r, \pi, R/4)|r\in [0, R/2]\}$ and the segment $[(R/2, \pi, R/4), (0,0, R/2)]$. Combining [\citenum{buser2010geometry}, (i), (iii), (vi)], we deduce that for a right triangle with legs $a$ and $b$, opposite to angles $\alpha$ and $\beta$, respectively, we have 
	$$\frac{\cos \alpha}{\sin \alpha}=\frac{\sinh b \cosh a}{\sinh a}.$$	
	Set $a=R/4, b=R/2, \alpha=\theta_0$, we see 
	$$ \theta_0= \cot^{-1}\left(2\cosh^2 \frac{R}{4}\right)\tn{ and } \beta=\cot^{-1}\left(\frac{\sinh \frac{R}{4} \cosh \frac{R}{2}}{\sinh \frac{R}{2}}\right).$$
	 At $x=(r,\theta, z)\in A$, let $v$ be the unique unit vector orthogonal to a plane which is orthogonal to $s$. Let $O(\theta_0) \subset T^1_x M $ be vectors forming an angle with $v$ less than or equal to $\theta_0$. Then $O(\theta_0)$ is a disk on the unit sphere with measure $\frac{1-\cos\theta_0}{2}$. The plane  through $x$ orthogonal to $(\phi_1, \phi_2)=(\theta_0, \pi)$ intersects $s(I)$ at an angle $\beta$ and the plane orthogonal to $(\theta_0, \pi/2)$ intersects $s(I)$ at an angle $\pi/2-\theta_0$. We deduce that every plane orthogonal to a vector in $O(\theta_0)$ intersects $s(I)$ at an angle 
	 $$> \cot^{-1}\left(\frac{\sinh \frac{R}{4} \cosh \frac{R}{2}}{\sinh \frac{R}{2}} \right) -\cot^{-1}\left(2\cosh^2 \frac{R}{4} \right).$$
	 Thus $O(s)$ has measure $\geq$ 
	$$\ds \int_{0}^{R/2}\int_{0}^{2\pi}\int_{-R/4}^{R/4}(1-\cos\theta_0) \sinh r \cosh r \diff z \diff \theta \diff r =\frac{\pi(1-\cos\theta_0)}{4}(R\cosh R-R). $$ 
\end{proof}
\noindent
Using the metric on the Grassmann bundle (\Cref{grassmann metric}), any $(x, \Pi) \in O(s)$ is $\frac{\theta_0}{\sqrt{2\pi}}$ close to (one which is tangent to a plane) orthogonal to $s$.  

The proof of \Cref{large Fuchsian component}.
\begin{proof}
	We give a proof for $n=3$ and for higher dimensions the proof works similarly. 
	We first prove that the maximal diameter of components of $M-S_i$ tends to $0$ as $g_i\coloneqq \genus[S_i]\rightarrow \infty$. Suppose, for contradiction, that there exists a subsequence $S_i$ with a component $C_i \subset M-S_i$ such that $ \diam(C_i) \geq 2R >0$ (we may assume $R<\inj(M)$).
	
 Since $S_i$ is totally geodesic, each component of $\hy-\pi^{-1}(S_i)$ is the intersection of finitely many geodesic half-spaces and thus must be convex. In particular, $C_i$ contains a geodesic segment $\gamma_i$ of length $2R$. Since $M$ is compact, up to taking a subsequence, $\gamma_i$ converges to a geodesic segment $\gamma$ of length $2R$. Using \Cref{transverse intersection is an open property}, the subset of Grassmann bundles tangent to planes transversally intersecting the central half of $\gamma$ with an angle $\geq h_1(R)$ contains an open set $O$ of measure $\mul(O)\geq h_2(R)>0$. 
 
 By \cite{msErgodicInvariantMeasure}, the surfaces $S_i$ become equidistributed, i.e., for $\mu_i \coloneqq \mu_{S_i}$
$$
 	\mu_i \rightharpoonup \mu_{\liouville}
$$
 in the weak$-^*$ topology, where $\mul$ is the Liouville measure supported on $\gtm$. Thus $$\liminf\mu_i(O)\geq \mul(O)\geq h_2(R)>0. $$ 
 This implies that for sufficiently large $i$, $S_i$ intersects the central half of $\gamma$ with an angle $\geq h_1(R)$. Combining this with the Hausdorff convergence $\gamma_i \rightarrow\gamma$, we conclude that $S_i$ must intersect $\gamma_i$ transversally. Contradiction. 
 
 Since the maximal diameter of components of $M-S_i$ tends to $0$, their maximal volume also tends to $0$. Thus the number of components necessarily tends to infinity. If a component of $M-S_i$ has diameter $<\inj(M)$, it is contractible. 	 
\end{proof}

Given a closed hyperbolic $n$-manifold with the side-pairings $\{g_1, \cdots, g_k\}$ of $\pi_1(M)\rightarrow \Gamma$ (in the sense of \Cref{fundamental domain convex with side-pairings}) for a fundamental domain $D$ with sides $F_i, F_i'$ (for $i=1, \cdots, k$), one can prove there exist $2k$ open subsets $O_i, O_i'$ of $\mathcal{G}_{n-1}M$ so that a \toge hypersurface $S$ whose $\mathcal{G}_{n-1}M $ passes through all $O_i$ and $O_i'$ are filling, using \Cref{transverse intersection is an open property}. Each $O_i$ (resp. $O_i'$) is a subset of $\mathcal{G}_{n-1}M$ tangent to planes nearly orthogonal to the axis of $g_i$ and based at points close to $F_i$ (resp. $F_i'$). 

Let $\plm$ be the set of planes. 

\begin{cor}\label{finitely many non filling}
	Let $M$ be a closed hyperbolic $n$-manifold. All but finitely many planes in $\plm$ are strongly filling. 
\end{cor}
\begin{proof}
	By Ratner's \cite{rmRatner'stheorem} and Shah's \cite{snRatnerTheorem} theorem, a plane corresponds to either a dense subset of $\mathcal{G}_{n-1}M$ or a closed \toge  hypersurface. The projection of a dense plane of $\mathcal{G}_{n-1}M$ must be strongly filling in $M$, and there are at most finitely many non-filling discrete planes in $M$ by \Cref{large Fuchsian component}. 
\end{proof}
\begin{rem}
\end{rem}
 A closed hyperbolic $3$-manifold lies in exactly one of the following categories:
\begin{enumerate}
	\item  arithmetic with infinitely many \cbility classes of Fuchsian surfaces (by \cite{raGeodesicSurfacesHyperbolic}), 
	\item arithmetic with no Fuchsian surfaces (by \cite{raGeodesicSurfacesHyperbolic}), or 
	\item non-arithmetic with at most finitely many \cbility classes of Fuchsian surfaces (by \cite{bfmsTotallyGeodesicSubmanifolds}).
\end{enumerate}
\Cref{large Fuchsian component} trivially holds for the second and third categories. [\citenum{mrAritheoremeticHyperbolic3Manifolds}, Theorem 9.5.4] shows how to construct infinitely many arithmetic hyperbolic $3$-manifolds containing Fuchsian surfaces. 
\subsection{Nearly geodesic surfaces are strongly filling}
\cite{afLimitsAsymptoticalFuchsian} and the proofs of \Cref{limit of almost geodesic surfaces converge to dense subset} and \Cref{nearly geodesic are filling} will reveal the subtle difference between measure convergence and Hausdorff convergence. It is illuminating to examine hyperbolic surfaces first. Let $S$ be a genus-$2$ hyperbolic surface and $\beta$ a simple closed geodesic. By $\db$ we mean applying the Dehn twist with respect to $\beta$. Let $\pml$ denote the space of projective measured laminations on $S$ and $C(S)$ denote the collection of closed sets in $S$ equipped with Hausdorff metric. From the basic theory of geodesic laminations (e.g., [\citenum{bmIntroductionGeometricTopology}, 8.3]), we have as $n\rightarrow \infty$, 
\begin{align*}
	\frac{1}{\ell(\dbo^n\dbt^n\gamma)}\dbo^n\dbt^n\gamma &\rightarrow \frac{1}{2}\beta_1 \sqcup \frac{1}{2}\beta_2 \tn{ in } \pml \tn{ with the weak-}^* \tn{ topology,} \\
\dbo^n\dbt^n\gamma &\rightarrow \lambda_1 \cup \beta_1 \cup \lambda_2\cup \beta_2 \tn{ in } C(S) \tn{ with the Hausdorff metric topology,}
\end{align*}
where $\lambda_i$ are simple complete geodesic spiraling around $\beta_1, \beta_2$. See \Cref{measure vs Hausdorff}. 
	\begin{figure}[H]
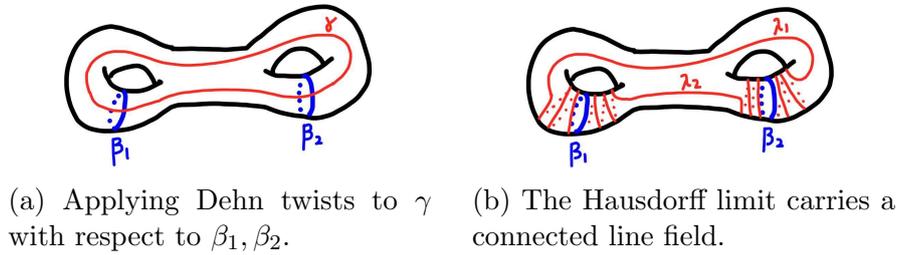

	\centering
	\begin{subfigure}{0.35\textwidth}
		\centering	\includegraphics[width=.8\linewidth]{gamma}
		\caption{Applying Dehn twists to $\gamma$ with respect to $\beta_1, \beta_2$.}
	\end{subfigure}
	\hspace{0.1in}
	\begin{subfigure}{0.35\textwidth}
		\centering	\includegraphics[width=.8\linewidth]{lambda}
		\caption{The Hausdorff limit carries a connected line field.}
	\end{subfigure}
	\caption{The weak-$^*$ limit is supported on two components. The Hausdorff limit connects the two components by two geodesics which are neither closed nor dense and forms a connected subset of the line bundle over $S$.}
	\label{measure vs Hausdorff}
\end{figure}
In contrast, every plane in a closed hyperbolic $3$-manifold $M$ is either dense  or a closed Fuchsian surface by Ratner's and Shah's theorem \cite{rmRatner'stheorem,snRatnerTheorem}. In Al Assal's thesis \cite{afLimitsAsymptoticalFuchsian}, he proves that the set of weak-$^*$ limits of the probability area measures induced on the Grassmann bundle by \ag  minimal surfaces achieves every convex combination of ergodic $\psltr$-invariant measures. When $M$ has no Fuchsian surfaces, the limiting measure must be the Liouville measure which implies that $\gt S_i$ equidistribute. When $M$ has a Fuchsian surface $S$, \Cref{limit of almost geodesic surfaces converge to dense subset} says that even if the limiting measure of a sequence of \ag surfaces $S_i$ is entirely concentrated on $S$, $\gt S_i$ is still asymptotically dense in $\gt M$, unless $S_i$ are eventually commensurable with $S$. If the limiting measure of $S_i$ is not entirely supported on a single Fuchsian surface, then $\gt S_i$ must be asymptotically dense. 

\begin{lem}\label{smoothly immersed minimal surfaces and connected Grassmann bundle}
 A $\pi_1$-injective surface $S$ in a $3$-manifold is homotopic to a smoothly immersed minimal surface whose Grassmann bundle is a connected closed subset of $\grm$. 
\end{lem}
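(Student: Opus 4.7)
My plan is to establish the two assertions in the lemma essentially independently: (i) produce a smooth minimal representative in the given homotopy class, and (ii) verify that its tangent-plane lift has connected closed image in $\grm$.

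For (i), I would invoke the Sacks--Uhlenbeck \cite{suMinimalImmersionClosedRiemann} and Schoen--Yau \cite{syExistIncomMinTopo} theorems already cited in the preliminaries, which produce a conformal harmonic (hence minimal) map in the homotopy class of any $\pi_1$-injective immersion $f: S \to M$. A priori such a map is only a branched minimal immersion; however, interior branch points of a minimal surface with a $3$-dimensional Riemannian target can be excluded by classical branch-point regularity results (Gulliver, Osserman), while the $\pi_1$-injectivity hypothesis rules out a constant map. This yields a smooth minimal immersion in the original homotopy class, which I still denote $f: S \to M$.

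For (ii), given the smooth immersion $f$, define the tangent-plane lift
$$\tilde{f}: S \longrightarrow \grm, \qquad \tilde{f}(x) = \bigl(f(x),\, df_x(T_xS)\bigr).$$
Because $S$ is orientable and $f$ is an immersion, $df_x(T_xS)$ is an oriented $2$-plane in $T_{f(x)}M$ varying smoothly in $x$, so $\tilde{f}$ is smooth and in particular continuous. By construction $\gt S = \tilde{f}(S)$.

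Finally, since $S$ is a closed, connected, orientable surface, it is compact and connected, so its continuous image in the Hausdorff space $\grm$ is compact and connected; thus $\gt S$ is a connected closed subset of $\grm$. The only step with any real content is branch-point removal in (i); once that is dispatched, (ii) is a direct application of the cited existence results together with elementary point-set topology, and I do not expect an obstacle.
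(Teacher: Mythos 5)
Your proposal is correct and follows essentially the same route as the paper: existence of a minimal representative via Schoen--Yau (and Sacks--Uhlenbeck), removal of branch points via Osserman and Gulliver, and then connectedness and closedness of $\gt S$ from continuity of the tangent-plane lift on the compact connected surface. No gaps.
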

\begin{proof}
	The existence of a smooth minimal surface representative follows from Schoen-Yau [\citenum{syExistIncomMinTopo}, Lemma 3.2 and Theorem 2.1] and Sacks-Uhlenbeck \cite{suMinimalImmersionClosedRiemann}. Osserman \cite{orNobranchedpoints} and Gulliver \cite{grRegularity} prove such minimal surfaces have no branched points. The smoothness of the tangent bundle of $S$ implies that $\gr S$ is a connected closed subset of $\grm$. The uniqueness of the minimal surface representative in \Cref{uniqueness} allows us to identify a homotopy class $[S]$ of $(1+\epsilon)$-\qf surfaces with a unique subset $\gt S$ of $\gtm$. 	
\end{proof}
\cite{afLimitsAsymptoticalFuchsian} proves that for a sequence of \ag surfaces $S_i$, both the pleated maps $f^p_i$ and minimal maps $f^m_i$ homotopic to $S_i\hookrightarrow M$ have the same weak-$^*$ measure limit along any convergent subsequence. 
The universal cover of $f^p_i$ may develop wrinkles and each $f^p_i$ lifts to a disconnected subset of $\gr M$. 

We now prove the following theorem which implies \Cref{nearly geodesic are filling}. 
\begin{theorem}
	Let $M$ be a closed hyperbolic $3$-manifold. If $S_i$ is a sequence of classes of \ag surfaces, then all but finitely many $[S_i]$ are \stf. 
\end{theorem}
\begin{proof}
We proceed by contradiction. Suppose $[S_i]$ is a sequence of $(1+\epsilon_i)$-\qf surfaces that are not \stf, where $\epsilon_i \rightarrow 0$. For each $[S_i]$, we take the minimal surface representative $S_i$. After taking a subsequence and renaming it if necessary, we have
\begin{lem}\label{psltr invariance of grs}
The Grassmann bundles $\gr S_i$ converge to a connected closed $\psltr$-invariant subset of $\grm$.
\end{lem}
\begin{proof}
Let $C(\grm)$ denote the set of closed subsets in $\grm$, equipped with Hausdorff metric. Since $M$ is closed, $\gtm$ is closed, and hence $C(\grm)$ is a compact metric space. Thus $\gr S_i$ has a convergent subsequence, which we still denote by $\gr S_i$. By \Cref{smoothly immersed minimal surfaces and connected Grassmann bundle}, each $\gt S_i$ is a connected closed subset of $\grm$, and thus
	$$\gr S_i \rightarrow \gr S \subset \grm$$
	where $\gr S$ is closed and connected. 
	
	The claim is that $\gr S$ is $\psltr$-invariant. Take $(x, \Pi)\in \gr S$ and let $P$ be the geodesic plane tangent to $(x, \Pi)$. Then $(x, \Pi)$ is the  limit of $(x_i, \Pi_i)\in \gt S_i$. Since $\epsilon_i \rightarrow 0$, the $L^{\infty}$-norms of the second fundamental forms of $S_i$ tend to $0$ by \Cref{seppi curvature estimates}. Lifting $S_i$ to $\tsi \subset \hy$ so that $\tsi$ all intersect a fixed fundamental domain.
	Since any loop in $\hs$ must have some points with curvature $>1$, $\tsi$ are eventually smoothly embedded disks with infinite injectivity radius.    
	By standard compactness theory for minimal surfaces (e.g., [\citenum{cmCourseMinSurface}, Proposition 7.14]), we have that the limit of the pointed Hausdorff convergence of $(\tilde{x_i}, \tilde{S_i})$ is the same as the graphical convergence on compact sets to a totally geodesic disc, after passing to a subsequence. Thus any disk $D(x, R) \subset P\subset \gt S$ of radius $R$ is the Hausdorff limit of disks $D_i(x_i, R_i)\subset \gr S_i$. Since $R$ is arbitrary, $P\subset \gr S$. 
\end{proof}
\noindent
Here $S$ is either one or uncountably many planes in $\hy$.
\begin{lem}\label{Hausdorff limit of \ag  surfaces}
	If the limit $\gr S$ is not a single discrete plane, it is dense and $\gt S_i \rightarrow \gt M$. 
\end{lem}
\begin{proof}
Enumerate the collection of discrete planes as $\{P_i\}$ which are embedded and mutually disjoint in $\grm$. Denote $\planes_j \coloneqq 
	\{P_i : i\leq j\}.$ 
By \Cref{psltr invariance of grs}, $\gt S$ is $\psltr$-invariant, and thus $\gt S\cap P_i$ is either $P_i$ or the empty set. 
For any finite $j$, the union $\cup_{\planes_j} P_i$ is a disconnected closed set in $\grm$. Hence $\gr S \cap \cup_{\planes_j} P_i$  is a strict subset of $\gr S$ for any $j$, unless $\gt S \subset P_i$ for some $i$, contradiction. 
 Thus for any genus $g$, there exist planes $P_k$ of genus $>g$ such that $P_k\subset\gr S -\cup_{\planes_j} P_i $ by \Cref{psltr invariance of grs}. 
  Since $P_k$ equidistribute as $k\rightarrow \infty$ by \cite{msErgodicInvariantMeasure}, $P_k \subset \gr S$ Hausdorff converge to a plane $P$ dense in $\gtm$. Since $\gr S$ is closed and $\psltr$-invariant, $P \subset \gr S$. 
\end{proof}

 Now suppose that $\gr S$ is supported on a single discrete plane $P$. Then for any small $\eta>0$, $\gr S_i \subset N_{\eta}(\gr S)$ for all sufficiently large $i$. 
 \begin{lem}\label{ag surfaces not supported in small nbhd of finitely many Fuchsian}
 	Let $S\subset M$ be totally geodesic and $\eta$ smaller than the normal injectivity radius of $\gt S$ in $\gtm$. If $S'$ is a smoothly immersed $\pi_1$-injective surface and $\gr S' \subset N_{\eta}(\gr S)$, then $S'$ and $S$ have homotopic images. 
 \end{lem}
 \begin{proof}
 	By a homotopy, $S'$ is minimal. Since $\gt S$ and $N_{\eta}(\gr S)$ are embedded, the preimages of $N_{\eta}(\gr S)$ in $\gt \hy$ consist of pairwise disjoint embedded neighborhoods $\{N_{\eta}( P) | \pr(P)\subset \hy \tn{ cover }S\}$ (where $P\subset \gt \hy$ is a plane). By \Cref{smoothly immersed minimal surfaces and connected Grassmann bundle}, $\gt S'$ is connected, and hence lifts to $\gt \ts'\subset N_{\eta}( P)$. The projection $\pr: \gr \hy\rightarrow \hy$ maps $\gt \ts'$ to $\ts' \subset N_{\eta}(\pr(P))$. The only complete minimal disk contained in  $N_{\eta}( \pr(P))$ is the plane $\pr(P)$ itself. Thus $\ts' =\pr(P)$ and $S'$ is \cble with $S$. 
 \end{proof}

 \begin{lem}\label{dense implies filling}
 	If a sequence of Grassmann bundles $\gt S_i$ over minimal surfaces converges to a dense subset of $\gt M$, then all but finitely many $S_i$ are strongly filling. 
 \end{lem}
 \begin{proof}
 	Suppose there exist nontrivial $[\gamma_i] \in \pi_1(M-S_i)$. Let $(x_i, v_i)$ be a unit tangent vector in the unit tangent bundle $T^1\gamma_i$. Since $T^1M$ is compact, there is a subsequence $(x_j, v_j)\rightarrow (x,v)$, which is tangent to a geodesic $\gamma$. By \Cref{Hausdorff limit of \ag  surfaces}, up to taking another subsequence, there exists $(p_j, \Pi_j) \in \gt S_j$ converging to $(x, \Pi)$ which is orthogonal to $(x,v)$. Looking at a fundamental domain $D$ in the universal cover $\hy$, we see there exist lifts $(\txj, \tvj)\rightarrow (\tx, \tv)$ in $D$, which are tangent to $\tgj$ and $\tg$, respectively. And there exist $(\tpj, \tpaij) \in \tsj \rightarrow (\tilde{x}, \tpai)$, which is tangent to a plane $\tp$. Recall $S_j$ are $(1+\epsilon_j)$-\qf and $\epsilon_j \rightarrow 0$. 
 	
 	As in the second paragraph of the proof of \Cref{psltr invariance of grs}, the limit of the pointed Hausdorff convergence of $(\tpj, \tsj)$ is the same as graphical convergence on compact sets to a totally geodesic disc, after passing to a subsequence. 
 	
 	By [\citenum{lvQuasiConformalMappings}, Theorems II.5.1 and II.5.3], a sequence of quasicircles with bounded quasi-Fuchsian constants has a subsequence converging in the
 	Hausdorff metric either to a quasicircle or to a point. 
 	Since the minimal surface $\tsj$ is contained in the convex core of $\partial \tsj$ (see [\citenum{saMinimalDiscsHyperbolicSpace}, Corollary 2.5]) and $(\tilde{p_j}, \widetilde{\Pi_j})\in \tsj$ converge to $ (\tilde{x}, \widetilde{\Pi})$, the Hausdorff limit of $\partial \tsj$ cannot be a point. Let $\tilde{\beta_j}$ be an intrinsic geodesic of $\tsj$ passing through $\tpj$. There exists a $\delta>0$ such that for all $j$ large enough, the intrinsic  curvature of $\tilde{\beta_j}$ is $<1-\delta$, since $S_j$ is \agn. By [\citenum{lcSmallCurvatureSurface}, Theorem 5.1], all $\beta_j$ are uniform quasi-geodesics of $\hy$. Thus as $(\tpj, \tsj)$ converge to $(\tx, \tpai)$ in the Hausdorff metric, $(\tpj, \tilde{\beta_j})$ converge to $(\tx, \tilde{\beta})$ a geodesic on $\tp$. Since $\tilde{\beta}_j$ are all uniform quasi-geodesics, their endpoints converge to the endpoints of $\tilde{\beta}$. Thus the limiting quasi-circle of $\partial \tsj$ must be the circle $\partial \tilde{P}$. Since $\tgj \rightarrow \tg$ which is orthogonal to $\tilde{P}$, for all large $j$,  $	\partial \tsj$ must separate $\partial\tgj$. This implies $\pi_1(M-S_j)$ cannot contain $\gamma_j$, which is a contradiction. 

 		We just proved there is a subsequence $S_j$ of the initial sequence $S_i$ which is strongly filling. If there exist infinitely many non-filling $S_i$, then we apply the above argument again to this collection and conclude it must contain a subsequence consisting of strongly filling surfaces, which is a contradiction.  
 \end{proof}
\end{proof}
\Cref{Hausdorff limit of \ag  surfaces} establishes \Cref{limit of almost geodesic surfaces converge to dense subset}. Lowe in [\citenum{lbDeformationsTotallyGeodesic}, Theorem 4.2] proves that the \ag  surfaces are dense in $\gtm$ assuming they have a limiting circle which is not the limit set of a closed Fuchsian surface.  Al Assal and Lowe \cite{alAsymptoticallyGeodesicsurfaces} independently obtain \Cref{limit of almost geodesic surfaces converge to dense subset} for finite-volume hyperbolic manifolds (including the noncompact case) and obtain various applications, including rigidity results for negatively curved manifolds [\citenum{alAsymptoticallyGeodesicsurfaces}, Theorem 1.7] and a gap theorem [\citenum{alAsymptoticallyGeodesicsurfaces}, Theorem 1.6] for the \qf constants of minimal surfaces in geometrically finite hyperbolic manifolds of infinite volume. 
\begin{rem}\label{connectedsurfaceUseAllpants}
Al Assal in \cite{afLimitsAsymptoticalFuchsian} constructs \ag surfaces which equidistribute or whose limit has a nontrivial scarring component. Building upon Liu-Markovic \cite{lmHomologyCurves}, in [\citenum{afLimitsAsymptoticalFuchsian}, Section 4, 5, and 7], in order to construct \emph{connected} \ag surfaces with various limiting properties, Al Assal uses \emph{all} $(R,\epsilon)$-good pants which are known to equidistribute (different limiting measures correspond to different weights on the set of good pants). Thus \Cref{Hausdorff limit of \ag  surfaces} and \cite{alAsymptoticallyGeodesicsurfaces} imply as $\epsilon$ gets smaller, the pants \emph{required} to construct a connected \eqf surface become dense.  
\end{rem}
\kms \cite{kmsGeometricallyTopologicallyRandomsurface} study the average limiting measures supported by the nearly geodesics surfaces. They discover that the distribution of random surfaces with bounded genus is drastically different from that of random surfaces with bounded area. 

The proof of \Cref{limit of almost geodesic surfaces converge to dense subset}  implies that the boundary circles of \ag surfaces in $\partial \hy$ forms a ubiquitous set. 

The proof of \Cref{ubiquitousSurfaces} (ubiquity of the boundary circles of \ag surfaces). 
\begin{proof}
	The idea is essentially in the proof of \Cref{dense implies filling}. 
	Using \Cref{limit of almost geodesic surfaces converge to dense subset}, the Grassmann bundles $\gt S_i$ converge to $\gt M$. Let $C_1, C_2$ be two disjoint and non tangent circles at $\partial \tm$ which bound two planes $P_1, P_2\subset \hy$, respectively. We want to show there exists some $S_i$ which has a preimage $\tsi$ whose limit set separates $C_1$ and $C_2$.
	
	Let $(\tilde{p}, \widetilde{\Pi})$ be an element of $\gt \tm$ which is tangent to a plane $P$ such that $ P$ separates $P_1$ and $ P_2$. Let $(p, \Pi)=\pi(\tilde{p}, \widetilde{\Pi})$ be a element in $\gt M$. By the convergence $\gt S_i \rightarrow \gt M$, there exist $(p_i, \Pi_i)\in \gt S_i$ which converge to $(p, \Pi)$. In the universal cover, there exists a preimage $\gt\widetilde{S_i}$ containing $(\tilde{p_i}, \widetilde{\Pi_i})$ such that $$(\tilde{p_i}, \widetilde{\Pi_i})\rightarrow (\tilde{p}, \widetilde{\Pi}).$$ 
	
	Then the second paragraph of the proof of \Cref{dense implies filling} implies that for sufficiently large $i$, $\partial \tsi$ separates $C_1$ and $C_2$ and $\tsi$ separate $P_1$ and $P_2$.  
\end{proof}
\subsection{The dichotomy of embedded surfaces in a hyperbolic $3$-manifold}
In this subsection, we prove the dichotomy for embedded surfaces in \Cref{embedded minimal surfaces of sufficiently small curvature is geodesic}. The following lemma is due to Feustel [\citenum{fcSurfacesIrreducible}, Theorem 1], Jaco [\citenum{jwFinitelyPresentedSubgroups}, Theorem 5], and Scott [\citenum{spSufficientlyLarge}, Theorem 1.4]. We present a concise proof with more modern terminology for the convenience of the readers. 
\begin{lem}\label{nontrivial covering of orientable is not embedded}
	Let $M$ be a compact $3$-manifold. If $S$ is a closed $\pi_1$-injective surface which nontrivially covers another orientable $\pi_1$-injective surface $S'$ embedded in $M$, then $S$ is not embedded. 
\end{lem}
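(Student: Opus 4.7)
The plan is to pass to the intermediate cover $p \colon M_{S_g} \to M$ associated to the surface subgroup $\pi_1(S_g) \leq \pi_1(M)$, exploit the product structure of this cover, and invoke Waldhausen's classification of incompressible surfaces in a product to obtain a contradiction.

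First I would suppose, for contradiction, that $S$ embeds in $M$, realizing the strict inclusion $\pi_1(S) \subsetneq \pi_1(S_g)$ of finite index $d > 1$ (finiteness is automatic since both surfaces are closed). Because $S_g$ is orientable and embedded in the orientable $3$-manifold $M$, it is two-sided, so there is a canonical embedded lift $\widehat{S_g} \hookrightarrow M_{S_g}$ whose inclusion induces the isomorphism $\pi_1(\widehat{S_g}) = \pi_1(M_{S_g}) = \pi_1(S_g)$. Since $\pi_1(S) \subset \pi_1(M_{S_g})$, covering-space theory produces a lift $\widehat{S} \hookrightarrow M_{S_g}$ that is again embedded (as a component of the preimage of the embedded $S$) and whose inclusion realizes the strict subgroup inclusion $\pi_1(\widehat{S}) = \pi_1(S) \subsetneq \pi_1(M_{S_g})$ of index $d > 1$.

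Next I would identify $M_{S_g}$ with $S_g \times \mathbb{R}$. The cover $M_{S_g}$ is an aspherical (hyperbolic, in the setting relevant to this paper) $3$-manifold whose fundamental group is a closed surface group. Scott's compact core theorem furnishes a compact core $C \subset M_{S_g}$ with $\pi_1(C) = \pi_1(S_g)$; being aspherical with surface-group $\pi_1$, $C$ must be an $I$-bundle over $S_g$, and tameness (Agol, Calegari--Gabai) promotes this to a global homeomorphism $M_{S_g} \cong S_g \times \mathbb{R}$ under which $\widehat{S_g}$ is isotopic to the horizontal slice $S_g \times \{0\}$. Waldhausen's classification of incompressible surfaces in product $3$-manifolds then asserts that every closed $\pi_1$-injective embedded surface in $S_g \times \mathbb{R}$ is isotopic to some horizontal slice $S_g \times \{t\}$ and therefore induces a $\pi_1$-isomorphism onto $\pi_1(S_g)$. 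Applied to $\widehat{S}$, this forces $\pi_1(\widehat{S}) = \pi_1(S_g)$, contradicting $\pi_1(\widehat{S}) = \pi_1(S) \subsetneq \pi_1(S_g)$.

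The central technical obstacle is the product identification $M_{S_g} \cong S_g \times \mathbb{R}$, which rests on tameness (or an equivalent structure theorem for aspherical $3$-manifolds with closed surface-group fundamental group). In the paper's hyperbolic setting this is standard, but in the fully general compact $3$-manifold statement one presumably needs to add a mild hypothesis such as irreducibility or $P^2$-irreducibility to run the core-plus-tameness argument. The remaining steps—producing the two lifts $\widehat{S_g}$ and $\widehat{S}$, and quoting Waldhausen's product theorem—are routine applications of covering-space theory and classical $3$-manifold topology.
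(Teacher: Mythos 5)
Your approach is essentially the same as the paper's: pass to the intermediate cover $M'$ corresponding to $\pi_1(S_g)$, lift both $S_g$ and $S$ to embedded surfaces there, identify $M'$ with a product over $S_g$, and invoke the classification of incompressible surfaces in such a product to force $\pi_1(S)=\pi_1(S_g)$, a contradiction. The one genuine difference is that you are explicit about the technical point the paper glosses over: the cover $M'$ is noncompact, so one cannot directly cite Hempel's classification of compact $I$-bundles to conclude $M'\cong S_g\times I$. You repair this via Scott's compact core plus a tameness input to obtain $M'\cong S_g\times\R$, and correctly flag that for a general compact $3$-manifold one needs a mild extra hypothesis (irreducibility) for this to run. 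In the paper's application $M$ is a closed hyperbolic $3$-manifold, so tameness of $M'$ comes from Bonahon's theorem (no need for Agol--Calegari--Gabai), but your version is the more honest formulation of the argument; alternatively one can avoid tameness altogether by choosing a compact core of $M'$ large enough to contain the compact lift $S'$ and applying Waldhausen there.
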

\begin{proof}
	The $\pi_1$-injectivity of $S$ and $S'$ in $M$ induces an injection of the fundamental group: 
	$$\pi(S) \rightarrow \pi_1(S') \rightarrow \pi_1(M).$$ 
	Suppose $S$ is embedded in $M$. Then, by lifting to the covering $M'$ of $M$ corresponding to $\pi_1(S')$, we also lift $S$ homeomorphically to an embedded \poi  surface $\tilde{S}$ in $M'$. Since $\pi_1(M') \cong \pi_1(S')$, by the classification of $3$-manifolds whose fundamental groups are virtually surface subgroups ([\citenum{hj3-manifolds}, 10.6 Theorem]), we have $M' \cong S'\times I$. By the classification of \poi  embedded surfaces in an $I$-bundle [\citenum{bmIntroductionGeometricTopology}, Proposition 9.3.18], $\tilde{S}$ is isotopic to the horizontal surface $S' \times 0$. This contradicts the assumption that $\tilde{S}$ is a nontrivial covering of $S'$. 
\end{proof}

Proof of \Cref{embedded minimal surfaces of sufficiently small curvature is geodesic} (on the dichotomy of embedded \poi surfaces in $M$). 
\begin{proof}
	Let $S$ be an embedded surface in $M$. By \Cref{nontrivial covering of orientable is not embedded}, $S$ is a primitive orientable surface.
	By the Lefschetz duality (e.g., [\citenum{bmIntroductionGeometricTopology}, Corollary 9.1.5]), we have 
	$\dim H^1(M-S; \Q) >0$
	which implies that $S$ is never filling.
	By \Cref{nearly geodesic are filling}, there exists an $\epsilon_0>0$ such that every $(1+\epsilon)$-\qf surface with $0<\epsilon\leq \epsilon_0$ is filling. Thus an embedded non Fuchsian surface satisfies $\epsilon>\epsilon_0$. 
	
	Now let $S\hookrightarrow M$ be an embedded minimal surface of supremum curvature $\lambda_0 \leq \frac{\epsilon_0}{2+\epsilon_0}<1$. We want to show $S$ is totally geodesic of genus $<\gn $. By Uhlenbeck \cite{ukclosedminSurfaceHyperbolic}, having supremum curvature $<1$ implies that $S$ is almost-Fuchsian, say $(1+\epsilon)$-quasi-Fuchsian. 
	By \Cref{huangwangCurvatureupperboundQF},  
	$$1+\epsilon\leq \frac{1+\lambda_0}{1-\lambda_0}\leq 1+\epsilon_0. $$
	If $\epsilon \neq 0$, then \Cref{nearly geodesic are filling} forces $S$ to be filling and hence non embedded. Thus $\epsilon=0$ and $S$ is Fuchsian. Suppose the genus $\gs \geq \gn $. If $S$ is an orientable surface attaining the smallest genus in its \cbility class, then \Cref{nearly geodesic are filling} implies $S$ is filling. If $S$ is \cble with a Fuchsian surface of genus $< \gn $, since commensurable Fuchsian surfaces span the same subset of $M$ (i.e., geometrically identical or same $\psltr$-orbit), the immersion $S\hookrightarrow M$ maps some points in $S$ to points in $M$ with multiplicity more than $1$. This violates the assumption that $S$ is embedded. Thus $S$ must have genus $<\gn $. 
\end{proof}
The following example from Dunfield gives infinitely many closed hyperbolic $3$-manifolds with infinitely many embedded \qf surfaces.  
\begin{exa}
	\label{example of closed hyp 3mf with infinite embedded QF surfaces}
	Let $K$ be a Montesinos knot with at least four rational tangles with all $q_i  \geq 3$.  By [\citenum{ouClosedIncompressibleStar}, Corollary 3], $M = S^3 - K$ contains embedded \poi surfaces of every genus $> 1$.  Let $Y_n$ be $1/n$ Dehn surgery on $M$, and pick $n $ sufficiently large so that $Y_n$ is closed hyperbolic by Thurston's Dehn surgery theorem (e.g., [\citenum{bmIntroductionGeometricTopology}, Corollary 15.1.3]).  By the proof of [\citenum{ouClosedIncompressibleStar}, Corollary 4(b)], all the surfaces in $M$ remain essential in $Y_n$.  Since $H_1(Y_n; \Z)$ = 0, none of these surfaces can be fibers or semi-fibers. Bonahon-Thurston's theorem \cite{twGeoTop,bfBoutsDesVar} says that \poi surfaces in $M$ is either \qf or virtually fibered.  Hence, $Y_n$ is a closed hyperbolic $3$-manifold with essential embedded \qf surfaces of every genus $> 1$.
\end{exa}

[\citenum{myVolumeHypGeodesicBoundary}, Theorem 4.2] bounds the number of embedded Fuchsian surfaces in $M$ by the volume of $M$ up to a uniform constant. 
Let $\me_0$ be the set of embedded Fuchsian surfaces and $\me_{\geq \epsilon_0}$ the set of \qf surfaces with constant at least $1+\epsilon_0$ but not any lower. 
The deviation from being totally geodesic can also be measured  geometrically using a uniform lower bound on the supremum principal curvature $\lambda_0$, i.e., 
$$ \inf\limits_{S\in \me_{\geq \epsilon_0}}\lambda_0(S)\geq \delta_1$$
when we take $S$ to be minimal in its homotopy class. The principal curvature lower bound uses [\citenum{ecHyperbolicGauss}, (5.5)]. A fibered surface is uniformly away from being totally geodesic by considering either the supremum principal curvature \cite{ukclosedminSurfaceHyperbolic} or the Hausdorff dimension of its limit set. 

\noindent In general, it is rare that a hyperbolic $3$-manifold contains a Fuchsian surface. Basilio, Lee, and Malionek \cite{blmGeodesicSurfaces} develop an algorithm that detects whether an ideally triangulated  $3$-manifold contains Fuchsian surfaces: out of 150,000 manifolds, they find $9$.

\subsection{Least area non-Fuchsian surfaces fail to minimize intersection numbers}
\begin{lem}\label{non geodesic least area surface does not minimize intersection number}
	Let $M$ be a hyperbolic $3$-manifold. Then for any \las non-Fuchsian surface $S$, there exists a closed geodesic $\gamma$ such that the intersection number $|S\cap \gamma|$ is strictly larger than the geometric intersection number $[S]\cdot [\gamma]$.
\end{lem}
This contrasts with \Cref{intersection between geodesic and codimension one submanifold} and the intersection number minimizer on a hyperbolic surface.
\begin{proof}
	Take any non Fuchsian \las surface $S$ of genus $g$ in $M$. Since minimal surfaces in $3$-manifolds are smoothly immersed by \Cref{smoothly immersed minimal surfaces and connected Grassmann bundle} and non transverse self-intersections are isolated by [\citenum{fhsLeastAreaImmersion}, Lemma 1.4], there is a point $p\in S$ not in the self-intersection locus of $S$. There exists a tangent plane at $p$ and also a plane $\Pi\subset M$ tangent to $S$ at $p$. 
	
	By [\citenum{cmCourseMinSurface}, Theorem 7.3, Lemma 6.13] or [\citenum{fhsLeastAreaImmersion}, Lemma 1.4], the local geometry of the minimal surface intersecting $\Pi$ can be described using classical models of non transverse intersection, which looks like $z=\tn{Re}(x+iy)^n$ intersecting $z=0$, resulting an $n$-prongs. Near $p$, there are some numbers $>1$ of parts of the minimal surface that are larger than $z=0$ (the tangent geodesic plane). 
	\begin{figure}[h]
		\centering
		\includegraphics[width=0.4\linewidth]{non-intersection-minimizing.png}
		\caption{The \las $S$ has redundant intersections with some geodesic $\gamma$}
		\label{Non essential intersections between closed geodesic and least area surfaces}
	\end{figure}
	We can draw two small open balls $B_1, B_2$ supported in a small neighborhood $U$ of $p$, so that $U$ is away from the self-intersection locus. See \Cref{Non essential intersections between closed geodesic and least area surfaces}. The set of geodesics which pass through $B_1$ and $B_2$ in $M$ contains an open subset. By [\citenum{khModernDynamics}, Theorem 17.6.2, Corollary 6.4.20], the collection of closed geodesics is a dense subset of the unit tangent bundle of $M$. Hence, there is a closed geodesic $\gamma$ in $M$ which passes through $B_1$ and $B_2$. Then the geodesic $\gamma$ intersects the least area surface $S$ at two points which are non essential intersections, and can be removed after applying a homotopy to $\gamma$. 
\end{proof}
 
 \subsection{Cubulations and one-orbit of hyperplanes}\label{cubulations}
 For backgrounds and basic definitions of cube complex and its connection to $3$-manifolds, we refer the readers to  \cite{afw3manifoldGroups,fhDeformingCubulations,bwBoundaryCriterionCubulation} and the references therein.
 
 A \emph{cube complex} is a metric polyhedral complex all of whose cells are unit cubes, i.e., it is the quotient of a disjoint union of copies of unit cubes under an equivalence relation generated by a set of isometric identifications of faces of cubes.
 Throughout this subsection, let $X$ be a CAT(0) cube complex. We define an equivalence relation on the edges of $X$ generated by identifying opposite edges of some square in $X$. Given an equivalence class $[e]$ of edges, the hyperplane $w$ dual to $[e]$ is the collection of midcubes which intersect edges in $[e]$. The complement $X- w$ consists of two halfspaces. 
 
 Let $G$ be a finitely generated group with a Cayley graph $\Delta$. A subgroup $H\leq G$ is \textit{codimension}-$1$ if it has a finite neighborhood $N_r(H)$ such that $\Delta-N_r(H)$ contains at least two $H$-orbits of \textit{deep} components, which do not lie in any $N_s(H)$. For $M$ a closed hyperbolic $3$-manifold and $G=\pi_1(M)$, a \qf surface subgroup of $G$ is codimension-$1$ and quasiconvex.
 
 A CAT(0) cube complex $X$ is \textit{essential}, if for each hyperplane $w$, each of the associated halfspaces contains points in $X$ arbitrarily far from $w$. If a group $G$ acts by automorphism on $X$, the action is \textit{essential} if for any point $x$ in the zero-skeleton of $X$, and each hyperplane $w$, each of the associated halfspaces contains points in $G\cdot x$ arbitrarily far from $w$. 
 
 The cube complex $X$ is \textit{hyperplane-essential} if each hyperplane $w$, regarded itself as a CAT(0) cube complex, is essential. The action of $G$ on $X$ is \textit{hyperplane-essential} if each hyperplane $w$ has  the property that the stabilizer of $w$ acts essentially on $w$.
 
 If $G$ is a finitely generated group with a finite collection of codimension-$1$ subgroups $H_1,\cdots, H_k$, Sageev \cite{smCubeComplex} introduced a powerful construction that produces an action of $G$ on a CAT(0) cube complex $X$ that is dual to a system of walls corresponding to these subgroups. This elucidates many structures of the group. Moreover, he established the following useful criterion for compactness property. 
 \begin{theorem}[Sageev, \citenum{smCodim1Subgroups}]\label{cube complex cocompact}
 	Let $G$ be a word-hyperbolic group, and $H_1,...,H_k$ be a  	collection of quasiconvex codimension-$1$ subgroups. Then the action of $G$ on the 	dual cube complex is cocompact.
 \end{theorem} 
 
 Proof of \Cref{cor cubulations}.
 \begin{proof}
  
 	Since $M$ is closed hyperbolic, $\pi_1(M)$ is word-hyperbolic. 
 	Since a \stfs \qf surface (or \toge hypersurface) $[S]$ separates any two points on the Gromov boundary of $G$, the singleton $\{[S]\}$ satisfies the assumption of [\citenum{bwBoundaryCriterionCubulation}, Theorem 1.4]. Thus $G$ acts properly and cocompactly on the dual \catzeros cube complex $X$. Since $[S]$ is quasi-convex codimension-$1$, it follows that $X$ is essential. 
 	
 	The orbits of the hyperplanes correspond to the image $H$ under the natural action by $\pi_1(M)$. The stabilizers of the hyperplanes correspond to conjugation of $H \cong \pi_1(S)$. Since $S$ is compact, $\pi_1(S)$ is again a word-hyperbolic group. With the existence of filling closed geodesics, using similar reasoning above, we deduce that $X$ is hyperplane-essential. 
 	
 	Moreover, $[S]$ strongly filling corresponds to the dual cube complex having one-orbit of hyperplanes. 
 	
 	Since non commensurable strongly filling surfaces have different limit sets on $\partial G$, they give rise to cubulations that are not $\pi_1(M)$-equivariantly isometric. 
 	As a consequence of \Cref{nearly geodesic are filling} and \Cref{large Fuchsian component}, we establish \Cref{cor cubulations}.  
 \end{proof}
 This partially answers a question of Fioravanti-Hagen on [\citenum{fhDeformingCubulations}, p. 879]. For any Gromov-hyperbolic group $G$ that admits a proper, cocompact action on a CAT(0) cube complex,  Fioravanti-Hagen [\citenum{fhDeformingCubulations}] establish the existence of a \catzeros cube complex $X$ on which $G$ acts properly, cocompactly, and essentially with a single orbit of hyperplanes, or acts properly, cocompactly, essentially, and hyperplane-essentially. Moreover, if we apply their theorems to the case where $G=\pi_1(M)$ for $M$ a closed hyperbolic $3$-manifold, it is not clear that their hyperplane-stabilizer is a surface subgroup\footnote{M. Hagen informed the author that their construction applied to a filling collection of Fuchsian surfaces should give hyperplane-stabilizers that are surface subgroups.}. This is because their cubulation comes from taking a filling collection $A$ of finitely many surfaces constructed by \cite{kmImmersingAlmostGeodesic} and then applying the boundary criterion \cite{bwBoundaryCriterionCubulation}. Surfaces in $A$ may coincide on pants instead of just intersecting at closed curves. Thus it is not clear their hyperplane-stabilizers are surface subgroups. In Kahn-Wright's construction \cite{kwNearlyFuchsian}, one uses two copies of each good pants. See also \Cref{connectedsurfaceUseAllpants}. 
 
 \begin{rem}[An analogy between Fioravanti-Hagen's remark and our results on ubiquity]
 	 Fioravanti-Hagen remark on [\citenum{fhDeformingCubulations}, p.880] that for certain $G$ and $X$ (including our cases), an arbitrarily small deformation of one of the original walls suffices to obtain a proper action with a single orbit of hyperplanes. In particular, \ag surfaces (resp. totally geodesic hypersurfaces) are ubiquitous by \Cref{ubiquitousSurfaces} and eventually strongly filling \Cref{nearly geodesic are filling,large Fuchsian component}; thus in an arbitrarily small neighborhood of a geometric circle (resp. geometric sphere $S^{n-2}$) at $\partial \tm$, there exists a limit set of a \stfs surface (resp. hypersurface). The technique of \cite{fhDeformingCubulations} is purely combinatorial. 
 \end{rem}
 
	\section{From filling surfaces to the topology and geometry of the ambient manifold}\label{other}
Dunfield first suggested the existence of a filling \poi surfaces in non hyperbolic manifolds, such as the Hantzsche-Wendt manifold. In this section, we will classify closed orientable geometric $3$-manifolds which contain a filling \poi surface (\Cref{filling 3-manifold}) and prove the isoperimetric inequality between a filling surface and $M$ (\Cref{isoperimetric inequality filling surface has area at least volume}). 
 	\subsection{The proof of \Cref{filling 3-manifold}}\label{non hyperbolic filling}
  By \Cref{covering preserves filling surfaces}, we can pass to the maximal manifolds to investigate the existence of filling surfaces. We use [\citenum{afw3manifoldGroups}, Table 1] for model geometric $3$-manifolds which are maximal among compact quotients. \\
Proof of \Cref{filling 3-manifold}. 
\begin{proof}
The spherical manifold $S^3$ contains a great sphere which is filling. Similarly, $\R P^3$ contains $\R P^2$ as a filling \poi surface with nontrivial $\pi_1$. 
	\begin{lem}
		A closed Sol $3$-manifold has no filling surfaces. 
	\end{lem}
	\begin{proof}
		\cite{maHomomorphismFiniteGroups} proves that Sol $3$-manifolds are LERF (locally extended residually finite). If $S \hookrightarrow M$ is a \poi filling immersed surface, by taking a finite cover $M'$ of $M$ (which is still a torus bundle over the circle), we lift $S$ to a filling union of embedded surfaces $S' \subset M'$ by \Cref{covering preserves filling surfaces}. In a torus bundle with Anosov monodromy, if there exists an embedded \poi surface not homotopic to a fiber, its intersection with a fiber is a simple closed curve, which contradicts the fact the monodromy is Anosov\footnote{We thank Yanqing Zou for helpful comments which simplify the proof.}. Thus the only embedded essential surfaces in $M'$ consist of fibers, which are all homotopic and whose complements are of the form $\T^2 \times I$. 
	\end{proof}
	
	The remaining cases are various Seifert fibered geometries. If $F$ is an immersed surface in $M$, we call it \textit{horizontal} if $F$ is everywhere transverse to the fibers of $M$ and \textit{vertical} if $F$ is everywhere tangent to the fibers of $M$. Any $\pi_1$-injective surface is homotopic to a minimal surface by \cite{syExistIncomMinTopo,suMinimalImmersionClosedRiemann}. 
	Hass in \cite{hjMinimalsurfacesSeifertFiber} classifies immersed $\pi_1$-injective surfaces in Seifert fiber spaces using minimal surface theory. 
	\begin{theorem}[Hass]
		Let $M$ be a Seifert fiber space with a geometric structure. Let $S$ be a closed surface not equal to $S^2, \R P^2$, and $f: S\rightarrow M$ a minimal $\pi_1$-injective immersion. Then $F$ is either vertical or horizontal. 
	\end{theorem}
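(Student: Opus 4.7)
I would approach this via the Killing vector field that generates the Seifert $S^1$-action. After passing to a finite regular cover if needed, each of the remaining Seifert geometries ($\mathbb{E}^3$, $\hp\times\R$, $\widetilde{SL_2(\R)}$, Nil, $S^2\times\R$) admits a nowhere-vanishing Killing field $X$ whose integral curves are the regular Seifert fibers. Writing $n$ for a unit normal along $f\colon S\to M$, I set
\[
u\coloneqq \langle X,n\rangle_M\circ f\in C^\infty(S).
\]
The zeros of $u$ are exactly the points where $S$ is tangent to $X$, so $S$ is vertical precisely when $u\equiv 0$ and horizontal precisely when $u$ is nowhere zero. The plan is to establish this dichotomy for $u$.

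Since $X$ is Killing, flowing $f$ by $\phi^X_t$ gives a variation of $f$ through minimal immersions, so the normal component $u$ of the variational vector field is a Jacobi field on the minimal surface $S$:
\[
\Delta_S u + \bigl(|A|^2+\mathrm{Ric}_M(n,n)\bigr) u = 0.
\]
Aronszajn's unique continuation theorem for this linear elliptic equation forces either $u\equiv 0$, in which case $S$ is vertical and we are done, or the nodal set $Z=u^{-1}(0)$ to be a nowhere-dense real-analytic $1$-subvariety of $S$. In the latter case I would argue $Z=\varnothing$ via a maximum-principle comparison: at any $p\in Z$ the tangent plane $T_pS$ contains $X_p$, and one constructs a minimal ``vertical'' comparison surface $C$ through $p$ with $T_pC=T_pS$ by flowing a horizontal tangent direction at $p$ under $\phi^X_t$. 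The strong maximum principle for tangent minimal surfaces then forces $S$ to coincide with $C$ on a neighborhood of $p$, so $u\equiv 0$ near $p$, contradicting unique continuation unless $Z=S$.

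The main obstacle is this comparison step. In the product geometries $\mathbb{E}^3$, $\hp\times\R$, $S^2\times\R$ the comparison surface $C=\gamma\times\R$ for $\gamma$ a base geodesic is totally geodesic and the argument is immediate. In Nil and $\widetilde{SL_2(\R)}$ the horizontal distribution is non-integrable and only ruled minimal vertical surfaces are available, so executing the maximum principle requires a more delicate second-variation comparison in the universal cover, exploiting $\pi_1$-injectivity and the fact that $X$-orbits are geodesics of the ambient metric. The hypothesis $S\neq S^2, P^2$ is used to guarantee $\chi(S)\le 0$, which both pins down a consistent normal orientation and excludes the minimal $2$-spheres that can arise as horizontal sections in the $S^2\times\R$ case and would otherwise violate the dichotomy.
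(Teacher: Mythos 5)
Note first that the paper does not prove this statement at all: it is quoted from Hass \cite{hjMinimalsurfacesSeifertFiber}, so your attempt can only be compared with the known proof. Your overall frame is the right one --- take the Killing field $X$ generating the fibration after a finite cover, observe that $u=\langle X,n\rangle$ is a Jacobi field on the minimal surface, and aim for the dichotomy ``$u\equiv 0$ (vertical) or $u$ nowhere zero (horizontal)''. The genuine gap is the step you use to empty the nodal set. The strong maximum principle for minimal surfaces requires the two surfaces to be locally \emph{ordered} near the touching point (one lies on one side of the other); mere tangency gives nothing. Two distinct minimal surfaces tangent at a point generically cross there in a $k$-prong pattern --- this is exactly the local picture invoked in \Cref{non geodesic least area surface does not minimize intersection number} of this paper. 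At a point $p$ of $Z=u^{-1}(0)$ your surface will in general simply cross the vertical comparison surface $C$, with $u$ changing sign across the nodal curve through $p$; no contradiction arises, and this failure already occurs in the product geometries where you call the argument ``immediate''. So the heart of the theorem, that a non-vertical surface meets no fiber tangentially, is not established by your argument.

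The missing ingredient is stability. Hass's theorem concerns least-area (hence stable) immersions, and least-area representatives are what this paper actually uses via Schoen--Yau; for a merely minimal, possibly unstable immersion your setup cannot close. For a closed two-sided stable minimal surface the Jacobi operator $L=\Delta_S+|A|^2+\mathrm{Ric}_M(n,n)$ has $\lambda_1(-L)\ge 0$; since $Lu=0$, either $\lambda_1>0$ and $u\equiv 0$, or $\lambda_1=0$ and any nontrivial $u$ in the kernel is a lowest eigenfunction, hence of one sign and then nowhere vanishing by the Hopf maximum principle applied to the scalar equation $\Delta_S u+qu=0$ --- no comparison surface and no unique continuation are needed. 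One-sided immersions are handled by passing to the two-sided double cover of the immersion; that, rather than $\chi(S)\le 0$ ``pinning down a normal orientation'', is where orientation genuinely enters, while $S\ne S^2,P^2$ serves to make $\pi_1$-injectivity meaningful and to rule out the spherical cases. To repair your write-up, either add the least-area/stability hypothesis and run the eigenvalue argument above, or supply a genuinely different mechanism for unstable minimal surfaces (in the product geometries one can, for instance, use harmonicity of the height function, but that does not extend to Nil or $\widetilde{\mathrm{SL}_2(\R)}$ along the lines you sketched).
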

	A vertical surface is never filling, since the complement in $M$ contains $S^1$ fibers. By [\citenum{bmIntroductionGeometricTopology}, Proposition 10.4.8], if a closed Seifert fiber manifold contains a horizontal surface, its Euler class number must be $0$. This rules out the Nil and the $\widetilde{\tn{SL}(2,\R)}$ geometry.
	
	Scott \cite{spSubgroupsGeometric} shows that the fundamental group of a Seifert fibered manifold $M$ is LERF. Thus to understand the complementary regions to an immersed $\pi_1$-injective surface $S$ in $M$, we can pass to a finite cover $N$ of $M$ so that the preimage of $S$ in $N$ is a finite union of embedded \poi  surfaces that are invariant under the covering group $C$. \Cref{covering preserves filling surfaces} implies that it suffices to show that no such collection of embedded \poi  surfaces fills $N$. 
	
	A closed $3$-manifold with $S^2\times \R$ geometry is finitely covered by $S^2\times S^1$. Then $S^2 \times \theta $ is the only \poi  surface up to homotopy, and any finite union of these surfaces is non-filling. 
	
	By [\citenum{bmIntroductionGeometricTopology}, 12.3], the $6$ orientable closed Euclidean $3$-manifolds are
	\begin{enumerate}
		\item the $3$-torus $\T$ made by gluing opposite faces of a cube,
		\item the manifold made by gluing opposite faces of a cube with a 1/2 twist on one pair,
		\item the manifold made by gluing opposite faces of a cube with a 1/4 twist on one pair,
		\item the manifold made by gluing opposite faces of a hexagonal prism with a $1/3$ twist on the hexagonal faces,
		\item the manifold made by gluing opposite faces of a hexagonal prism with a $1/6$ twist on the hexagonal faces, and 
		\item the Hantzsche–Wendt manifold, whose fundamental domain (in the lower right-hand corner of \Cref{fundamental domains}) is made from two unit cubes whose faces are glued according to the markings (the unmarked top and bottom are glued with translations).  
	\end{enumerate}
	We denote these manifolds by $M_1, \cdots, M_6$. 
	Their fundamental domains are illustrated in \Cref{fundamental domains}\footnote{This figure is taken from [\citenum{bmIntroductionGeometricTopology}, Figure 12.2], courtesy of Bruno Martelli.}.
	\begin{figure}[h]
		\centering
		\includegraphics[scale=0.4]{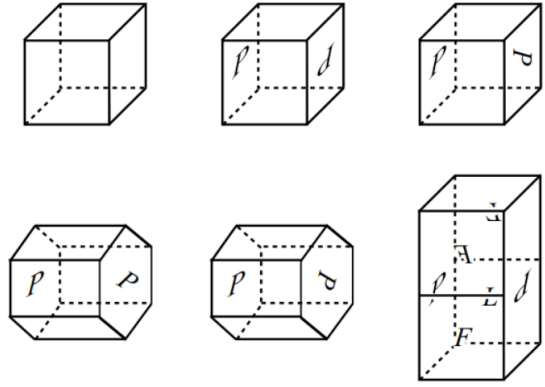}
		\caption{Fundamental domains of the six closed flat orientable $3$-manifolds. Unlabeled faces are glued by translations.}
		\label{fundamental domains}
	\end{figure}
	
	The $3$-torus has no filling surfaces, since every horizontal surface is a torus whose complement is $\T^2\times I$. 
	Using ideas in \Cref{filling fuchsian surface intersects every geodesic} and \Cref{intersection between geodesic and codimension one submanifold}, we prove
	\begin{lem}\label{three families to be filling}
		Let $M$ be a closed Euclidean $3$-manifold and $S$ a totally geodesic surface. Then the following are equivalent. 
		\begin{enumerate}
			\item The homotopy class $[S]$ is filling.
			\item The preimage $\ts \subset \R^3$ contains three families of planes whose normal vectors are linearly independent. 
		\end{enumerate}	 
	\end{lem}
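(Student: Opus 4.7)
The plan is to analyze the lift $\ts \subset \R^3$ as a locally finite union of affine $2$-planes obtained as the $\Gamma$-orbit of a single plane $P_0$, where $\Gamma = \pi_1(M)$. Group these planes by their normal direction into families $F_1,\dots,F_k$; since the point group $\Gamma/\Lambda$ is finite (Bieberbach), $k$ is finite. First I would verify that within each family $F_i$ the parallel planes are discretely spaced, and consequently the normal $n_i$ is rational with respect to the translation lattice $\Lambda$. This rationality comes from the fact that $\pi_1(S)$ acts cocompactly on $P_0$ by Euclidean isometries, so $\pi_1(S) \cap \Lambda$ is a rank-two lattice contained in $P_0$, forcing $P_0$ (and hence $n_i$) to be defined over $\Q$ in the coordinates of $\Lambda$.

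For the implication $(2) \Rightarrow (1)$, suppose three families have linearly independent normals $n_1, n_2, n_3$. Then $\R^3 \setminus \ts$ decomposes into open convex regions, each an intersection of one open slab from each of the three families; since the slabs are bounded in their respective normal directions and the normals are linearly independent, each region is a bounded parallelepiped. Any $g \in \Gamma$ that preserves such a bounded region acts as an isometry with bounded orbit, hence has a fixed point by the Brouwer fixed-point theorem applied to the closure, which contradicts the freeness of the $\Gamma$-action unless $g = 1$. Therefore each component of $M - S = (\R^3 \setminus \ts)/\Gamma$ is homeomorphic to an open parallelepiped and is contractible, so $[S]$ is filling.

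For $(1) \Rightarrow (2)$, suppose the normals span a subspace $V$ of dimension at most two. By the rationality established above, the orthogonal complement $V^{\perp}$ contains a nonzero vector $v \in \Lambda$. Pass to the finite torus cover $\R^3/\Lambda \to M$. The line $\{p + tv : t \in \R\}$ is parallel to every plane of $\ts$, so for any $p$ lying outside the countable union $\ts$ the line is disjoint from $\ts$ and descends to a nontrivial closed geodesic $\gamma$ in $\R^3/\Lambda$ disjoint from the preimage of $S$. Thus $\pi^{-1}(S)$ is not filling in $\R^3/\Lambda$, and by the contrapositive of \Cref{covering preserves filling surfaces}, $[S]$ is not filling in $M$.

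The main obstacle I anticipate is the rationality step in the first paragraph: one must argue not only that each family $F_i$ is discrete but also that the normal directions themselves are rational with respect to $\Lambda$, so that $V^{\perp}$ actually meets $\Lambda$ nontrivially. Once this arithmetic input is secured from the Bieberbach structure, the remaining steps are a combinatorial tiling argument in one direction and a straight geodesic dodging countably many planes in the other.
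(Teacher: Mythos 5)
Your proof is correct and, in both directions, somewhat more careful than the paper's own sketch. Let me compare.

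For $(2)\Rightarrow(1)$ your argument is genuinely different from the paper's. The paper reuses the hyperbolic-style argument: it shows every closed geodesic in $M$ has an essential transverse intersection with $S$, hence no nontrivial element of $\pi_1(M)$ survives in $\pi_1(M-S)$ (the contractibility of the universal-cover complementary pieces is left implicit). You instead observe directly that each component of $\R^3\setminus\ts$ is a bounded open convex region (bounded because three of the slab families have linearly independent normals), and that a deck transformation preserving such a region has bounded orbits, hence a fixed point, hence is trivial by freeness of the action; so each component of $M-S$ is homeomorphic to a convex open set and is contractible. This is more self-contained and avoids the detour through closed geodesics, though the conclusion is the same. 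One small cosmetic issue: "each region is a bounded parallelepiped" is not literally true if there are more than three normal directions or unevenly spaced planes, but all you need is bounded and convex, which you establish.

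For $(1)\Rightarrow(2)$ (contrapositive), your argument is essentially the paper's, but you supply a step the paper glosses over. The paper asserts that if there are at most two families, a line parallel to all of them has discrete $\Z^3$-orbit and descends to a closed geodesic missing $S$, but it does not justify why such a rational direction exists. You identify the required input — that each normal is orthogonal to a rank-two sublattice of $\Lambda$ (since $\pi_1(S)\cap\Lambda$ is a cocompact rank-two translation lattice in the lift $P_0$), so the normals are rational, and hence $V^\perp$ is a rational subspace of positive dimension and meets $\Lambda$ nontrivially. This is precisely what makes the line close up in the finite torus cover, and your invocation of \Cref{covering preserves filling surfaces} to descend the non-filling conclusion to $M$ is the right move. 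Your proposal is a complete and slightly tighter version of the paper's argument.
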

	\begin{proof}
		$\Leftarrow$. Suppose $S$ is a geodesic surface such that $\ts$ contains three families of planes with linearly independent normals. Let $\tg$ be a geodesic in $\R^3$. Then $\tg$ transversely intersects some plane $\tp$ at a unique point. No homotopy applied to $\pi(\tg)$ and $\pi(\tp)$ in $M$ removes this intersection. This implies that 
	$\pi_1(M-S)$ cannot contain $[\pi(\tg)]$. Thus $[S]$ is filling. 
	
	$\Rightarrow$. Suppose $\ts$ only contains two families of planes. Then there exists a family of geodesics $ \subset \R^3-\ts$. Since $\ts$ is discrete, any geodesic $\tg$ in this family has a discrete $\Z^3$-orbits in $\R^3$. Thus $\pi(\tg)$ is a closed geodesic disjoint from $S$ in $M$. The remaining cases are similar. 
	\end{proof}
	Readers can compare \Cref{three families to be filling} with \Cref{filling fuchsian surface intersects every geodesic}. 
	
	For $i=2,3,6$, $C_i$ denotes the covering groups corresponding to the covering $M_1 \rightarrow M_i$. For $i=4,5$, $C_i$ refer to the groups corresponding to the covering from hexagonal $\T^3$ (the hexagonal fundamental domain with opposite faces glued to each other) to $M_i$. 
	
	The covering group $C_2$ of $M_2$ is of order $2$. The image of a torus in $\T$ under $C_2$ yields at most two families of planes in $\R^3$. 
	
	The covering groups of $M_3, \cdots, M_6$ have order $\geq 3$; hence, it is not surprising that they all have filling surfaces, in view of \Cref{three families to be filling}. 
	
	The covering group $C_3$ of $M_3$ is of order $4$. A filling surface corresponds to the image of $C_3$ acting on the diagonal torus from the upper left edge to the lower right edge in the front face (\Cref{m3}). 
	
	For $M_4$, the covering group $C_4$ is of order $6$ and acts on the hexagonal prism (after pulling back by translation) as rotation by $60$ degrees. A filling surface corresponds to the diagonal parallelogram annulus shown in the picture \Cref{m4}. The fiber directions are perpendicular to the hexagonal faces. The projection of a single annulus to the hexagonal face is a diagonal rectangle. Six of them obstruct all fibers by \Cref{three families to be filling}. The intersection of each hexagon with the six annuli consists of six arcs which cut the hexagon torus into contractible polygons. Thus the complement of the surface in $M_4$ is contractible. 
	\begin{figure}[h]
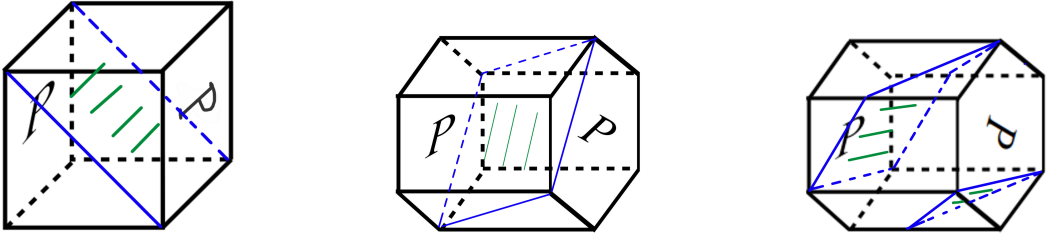

		\centering
			\begin{subfigure}{0.3\textwidth}
			\centering	\includegraphics[width=.7\linewidth]{M_3filling}
			\caption{The annulus under $90^{\circ}$ rotation generates a filling surface in $M_3$}
			\label{m3}
		\end{subfigure}
		\hspace{0.1in}
		\begin{subfigure}{0.3\textwidth}
			\centering	\includegraphics[width=.7\linewidth]{M_4filling}
		\caption{The annulus under $60^{\circ}$ rotation generates a filling surface in $M_4$}
		\label{m4}
		\end{subfigure}
		\hspace{0.1in}
		\begin{subfigure}{0.3\textwidth}
				\centering
		\includegraphics[width=.7\linewidth]{M_5filling}
		\caption{The annulus under $120^{\circ}$ rotation generates a filling surface in $M_5$}
		\label{m5}
		\end{subfigure}
		\caption{Filling surfaces in flat manifolds}
		\label{fillingflat}
	\end{figure}
	
	Similarly, we construct a filling surface in $M_5$ (\Cref{m5}). 
	
The last one is the Hantzsche-Wendt manifold $M_6$, which has as a $4$-fold cover the $3$-torus $M_1$. From the isometries at [\citenum{cmCovering}, page 3], the covering group $C_6$ acting on some torus $S$ (chosen later) results in four copies of $S$ each representing a different homology class in $H_2(\T^3; \Z)$. Denote the projection by $\pi_6:M_1 \rightarrow M_6$. Take $S$ to be a torus with normal vector $(1,1,1)$, then the other the other tori have normals $(-1, -1, 1), (-1, 1, -1)$, and $(1, -1, -1)$, resulting in contractible complementary regions by \Cref{three families to be filling}. Thus $\bar{S}\coloneqq \pi_6(S)$ is a filling surface in $M_6$. 
	
A closed $3$-manifold with	$\hp \times \R$ geometry has a finite cover $S_g \times S^1$. We adapt the construction of a filling surface in $M_4$. For example, consider instead of a hexagon prism an octagon prism $O\times I$ with proper gluings to form $S_2\times S^1$. A quotient manifold $M$ is the prism with $O \times 0$ and $O \times 1$ identified by a translation with a $2\pi/8$-twist ($S_2\times S^1\rightarrow M$ is an $8$-fold cover). 
The annulus formed by gluing two parallelograms in \Cref{fillinghyperbolic} generates a filling surface $S$ in $M$ under the covering group. Now we show $[S]$ is filling. The annulus generates $16$ families of planes in $\hp\times \R$. Let $\tg$ be a vertical fiber perpendicular to the octagon face $O\times 1$. Then $\tg$ meets one of the planes exactly once, implying $\pi_1(M-S)$ does not contain $[\gamma]$. Similarly, if $\tg$ is any other geodesic, then we can find a plane $\tp$ from the $16$ planes so that $\tg$ and $\tp$ intersect exactly once. This implies that $[S]$ is filling. 
\begin{figure}[h]
	\centering
	\includegraphics[scale=0.3]{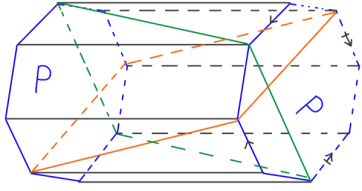}
	\caption{A filling surface in an $\hp\times \R$ manifold	}
	\label{fillinghyperbolic}
\end{figure} 
\end{proof}
 \subsection{The measures and ranks of a filling surface and its ambient manifold}
The proof of \Cref{isoperimetric inequality filling surface has area at least volume}
 \begin{proof}
 	Let $\rk(\cdot)$ denote the rank of a group (the minimum number of generators). 
 	Suppose $[S]$ is filling. By \Cref{filling representative}, a least area representative $S$ cuts $M$ into balls. Moreover, $\area(S)\leq 4\pi(g-1)$ and $\rk(\pi_1(S))=2g$. The boundary components of $M-S$  together form two copies of $S$.
 	By [\citenum{rjFoundationsHyperbolicManifolds}, (3.4.6)], the hyperbolic metric in spherical coordinates is
 	$$\di s^2_{\hy} =\di r^2+\sinh^2 r \, \di s^2_{S^2}, \tn{ where }\di s^2_{S^2} \tn{ is the standard metric on the Euclidean unit sphere.}$$
 	Then the volume of a hyperbolic $3$-ball is 
 	\begin{equation}\label{volume}
 		\vol(B(r))= \area(S^2) \int_0^r\sinh^2 t\, \di t =  \frac{\pi}{2} (e^{2r}-e^{-2r}-4r)
 	\end{equation}
 	while the hyperbolic area 
 	\begin{equation}\label{area of geodesic disk}
 		\area(\partial B(r))= \area(S^2)\sinh^2 r=\pi(e^{2r}+e^{-2r}-2).
 	\end{equation}
 	It is elementary to prove $2e^{-2r}+4r \geq 2$ and hence 
 	$\area(\partial B(r)) \geq 2\vol(B(r)).$
 	Now consider a ball $B(r)$ such that $\vol(B(r))=\vol(M)$. 
 	By classical isoperimetric inequality for topological balls in hyperbolic $3$-space (e.g., [\citenum{cdPresentationLengthVolume}, p.2]), we have 
 	$$\area(\partial(M-S))=2\area(S) \geq \area(\partial B(r)) \geq 2\vol(B(r))=2\vol(M).$$
 	Equality is only achieved at $r=0$ and hence it is a strict inequality.
 	Let $V(r)=\vol (B(r))$ and $\rk(\pi_1(M))$ the rank of $\pi_1(M)$. 
 	[\citenum{bglsCountingArithmeticLattices}, p. 2204] gives an explicit formula relating the rank of a lattice to its covolume, which for $\psltc$ is 
 	$$\rk(\pi_1(M)) \leq \frac{V(1.25\epsilon)}{V(0.25\epsilon)^2}\vol(M)<1.698\times 10^9 \vol(M),$$
 	where $\epsilon=0.104/10$ (we have used Meyerhoff's estimate \cite{mrLowerBoundVolume} of the Margulis constant) and we have used \eqref{volume}. 
 \end{proof}

 \begin{exa}\label{example filling torus in rank tend to infinity}
 	There exists a sequence of $3$-manifolds $M_j$ admitting a filling torus such that $\rank(M_j)\rightarrow \infty$. 
 \end{exa}
 \begin{proof}
 	The construction in \Cref{fillinghyperbolic} is an immersed annulus which under the covering transformations glues to a filling torus in the manifold with an eight-fold cover $S_2 \times S^1$. The same construction applies to the manifold whose $4g$-fold cover is $S_g\times S^1$ and produces a filling torus. 
 \end{proof}

\bibliographystyle{alpha}
\bibliography{Reference.bib}
\address{Center for Mathematics and Interdisciplinary Sciences, Fudan University, Shanghai, 200433, China \\
Shanghai Institute of Mathematics and Interdisciplinary Sciences, Shanghai, 200433, China}
\email{xhh@simis.cn}

\end{document}